\newtheorem{theorem}{Theorem}[section]
\newtheorem{corollary}[theorem]{Corollary}
\newtheorem{definition}[theorem]{Definition}
\newtheorem{example}[theorem]{Example}
\newtheorem{lemma}[theorem]{Lemma}
\newtheorem{proposition}[theorem]{Proposition}
\newtheorem{remark}[theorem]{Remark}
\newcommand{\BB}{\mathcal{B}}
\newcommand{\DD}{\mathcal{D}}
\newcommand{\EE}{\mathcal{E}}
\newcommand{\OO}{\mathcal{O}}
\newcommand{\FF}{\mathcal F}
\newcommand{\GG}{\mathcal G}
\newcommand{\PP}{\mathcal{P}}
\newcommand{\sh}{\mathbf{Sh}}
\newcommand{\restricted}[2]{#1{\mid}_{#2}}
\def\dom{\mathrm{dom}}
\def\Spec{\mathrm{Spec}}
\def\pt{\mathrm{pt}}
\def\patch{p}
\newcommand{\op}{\mathrm{op}}
\newcommand{\coh}{\mathrm{coh}}
\let\meet\wedge
\let\join\vee
\newcommand{\subalign}[1]{%
  \vcenter{%
    \Let@ \restore@math@cr \default@tag
    \baselineskip\fontdimen10 \scriptfont\tw@
    \advance\baselineskip\fontdimen12 \scriptfont\tw@
    \lineskip\thr@@\fontdimen8 \scriptfont\thr@@
    \lineskiplimit\lineskip
    \ialign{\hfil$\m@th\scriptstyle##$&$\m@th\scriptstyle{}##$\crcr
      #1\crcr
    }%
  }
}
\let\phi\varphi
\let\epsilon\varepsilon
\renewcommand{\ast}{{\fontfamily{lmr}\selectfont\star}}
\title{Duality for noncommutative frames}
\author{Karin Cvetko-Vah} 
\address{University of Ljubljana, Faculty of Mathematics and Physics\\\mbox{Jadranska 19}\\1000 Ljubljana (Slovenia) \\ {\tt karin.cvetko@fmf.uni-lj.si}}
\author{Jens Hemelaer}
\thanks{Jens Hemelaer was supported in part by a PhD fellowship of the Research Foundation (Flanders) and in part by the University of Antwerp (BOF)}
\address{Department of Mathematics, University of Antwerp \\ 
 Middelheimlaan 1, \mbox{B-2020} Antwerp (Belgium) \\ {\tt jens.hemelaer@uantwerpen.be}}
\author{Lieven Le Bruyn}
\address{Department of Mathematics, University of Antwerp \\ 
 Middelheimlaan 1, \mbox{B-2020} Antwerp (Belgium) \\ {\tt lieven.lebruyn@uantwerpen.be}}
   \def\MR#1{}
\begin{document}

\begin{abstract}
We characterize the left-handed noncommutative frames that arise from sheaves on topological spaces. Further, we show that a general left-handed noncommutative frame $A$ arises from a sheaf on the dissolution locale associated to the commutative shadow of $A$. Both constructions are made precise in terms of dual equivalences of categories, similar to the duality result for strongly distributive skew lattices in \cite{kcv-priestley}.
\end{abstract}

\maketitle

\tableofcontents

\section{Introduction}
\label{sec:introduction}

Let $Y$ be a topological space. Following the terminology of Simmons in \cite{simmons}, we define the front topology on $Y$ to be the topology generated by all open and closed subsets of $Y$. Further, we write $Y_f$ for $Y$ equipped with the front topology. Now let $\EE$ be a sheaf on $Y_f$ with $\EE(Y_f) \neq \varnothing$ and consider the set 
\begin{equation*}
A = \{ (U,s) : U \subseteq Y \text{ open and }s \in \EE(U) \}.
\end{equation*}
Then we can define a restriction operation $\meet$ and an overwrite operation $\join$ on $A$ as follows:
\begin{itemize}
\item $(U,s) \meet (V,t) = (U \cap V, s|_{U \cap V})$;
\item $(U,s) \join (V,t) = (U \cup V, s|_{U-V} \cup t)$.
\end{itemize}
With these operations, $A$ is a (left-handed) noncommutative frame, as introduced in \cite{kcv-frames}. Each noncommutative frame $A$ has a commutative shadow $A/\mathcal{D}$, where $\mathcal{D}$ denotes Green's equivalence relation. In the above example, the commutative shadow agrees with the frame of open subsets of $Y$. Geometrically speaking, $A$ can be thought of as (the set of opens of) a noncommutative space covering the topological space $Y$. In fact, the theory of noncommutative frames was motivated by \cite{llb-covers-general-version}, in which such a noncommutative topology was constructed on the points of the Arithmetic Site by Connes and Consani \cite{connes-consani}. 

Noncommutative frames belong to the theory of skew lattices. In this theory, the $\meet$ and $\join$ operations are no longer required to be commutative, as in the case of lattices. Instead they are idempotent, associative operations satisfying the absorption laws
\begin{equation*}
x \meet (x \join y) = x = x \join (x \meet y) \text{ and }
(x \meet y) \join y = y = (x \join y) \meet y.
\end{equation*}
We will recall some basic results for skew lattices in Section \ref{sec:preliminaries}. For a more detailed discussion, we refer the reader to Leech's survey \cite{jl-survey}. For an overview of the primary results, see \cite{jl-journey}. 

In \cite{nctop}, noncommutative frames were used to define noncommutative generalizations of toposes, by replacing the subobject classifier $\mathbf{\Omega}$ with an internal noncommutative frame $\mathbf{H}$ having the subobject classifier as commutative shadow. While $\mathbf{\Omega}$ has a single top element (corresponding to the statement ``true'' in logic), there are now multiple top elements in $\mathbf{H}$. An example of a noncommutative topos that is not an elementary topos, is the category of complete directed graphs with a 4-coloring of the edges. It is impossible here to give an unambiguous sheafification: it is not enough to force every pair of vertices to have an edge between them, there also has to be a choice of what color this edge should be.

In this paper, we will first study which noncommutative frames can be constructed as above from a pair $(Y,\EE)$, where $Y$ is a topological space and $\EE$ is a sheaf on $Y_f$ such that $\EE(Y_f)\neq\varnothing$. These noncommutative frames will be called \emph{spatial}. After giving two examples of left-handed noncommutative frames that are not spatial, we show that there is an adjunction
\begin{equation*}
\begin{tikzcd}
\sh(\mathsf{Sp}_f)^\op~ \ar[rr, bend right=16pt,"{H}"'] & & ~\mathbf{LNFrm} \ar[ll, bend right=16pt,"{G}"']
\end{tikzcd},
\end{equation*}
where $H$ sends a pair $(Y,\EE)$ to its associated noncommutative frame. This adjunction restricts to a categorical duality between spatial noncommutative frames and the pairs $(Y,\EE)$ such that $Y$ is a sober topological space (i.e.\ each irreducible closed subset has a unique generic point).

The above duality for spatial noncommutative frames is based on previous work of Bauer, Cvetko-Vah, Gehrke, van Gool and Kudryavtseva \cite{kcv-priestley}, in which classical Priestley duality is extended to strongly distributive skew lattices, which are the noncommutative counterparts of distributive lattices. In the commutative world, bounded distributive lattices correspond to Priestley spaces \cite{priestley}, or equivalently spectral spaces in the sense of Hochster \cite{hochster}, while frames correspond to  arbitrary topological spaces (or more generally locales). In this sense, the duality presented here can be seen as a natural generalization of \cite[Theorem 3.7]{kcv-priestley}.

For a topological space $Y$, open sets are by definition completely determined by the points they contain. In terms of the frame $\OO(Y)$ of open subsets of $Y$, this means that for any $u,v \in \OO(Y)$ there is a morphism of frames
\begin{equation*}
p : \OO(Y) \longrightarrow \mathbf{2}.
\end{equation*}
such that $p(u)\neq p(v)$. For a noncommutative frame $A$, we similarly say that two elements $a,b \in A$ can be separated if there exists a morphism of noncommutative frames
\begin{equation*}
q : A \longrightarrow P_{\{1_a,1_b\}}
\end{equation*}
such that $q(a) \neq q(b)$, where $P_{\{1_a,1_b\}}$ is the primitive skew lattice consisting of an element $0$ and two top elements $1_a$ and $1_b$. We will show that $A$ can be embedded in a spatial noncommutative frame if and only if each two distinct elements can be separated. 

In the last part of the paper, we prove a duality result for left-handed noncommutative frames that are not necessarily spatial. In general, the commutative shadow $L = A/\mathcal{D}$ of a noncommutative frame can be seen as the frame of open subsets of a locale $Y$. A familiar object in locale theory is the dissolution locale $Y_d$ of $Y$. By definition, sublocales of $Y$ correspond to closed sublocales of $Y_d$. We will see in Theorem \ref{thm:duality} that there is an equivalence of categories
\begin{equation*}
\mathbf{LNFrm} ~\simeq~ \sh(\mathsf{Loc}_d)^\op
\end{equation*}
where $\mathbf{LNFrm}$ is the category of left-handed noncommutative frames, and $\sh(\mathsf{Loc}_d)$ is a suitable category of pairs $(Y,\FF)$, with $Y$ a locale and $\FF$ a sheaf on $Y_d$, satisfying $\FF(Y_d) \neq \varnothing$. This duality result is again based on Theorem 3.7 in \cite{kcv-priestley}.

In a recent paper \cite{frame-of-nuclei-spatial}, \'Avila, Bezhanishvili, Morandi and Zald\'ivar show that, if $Y$ is a sober topological space, then $Y_f$ is the space of points of the dissolution locale $Y_d$. In other words, $Y$ equipped with the front topology is the topological space that gives the best approximation to the dissolution locale $Y_d$. This allows us in Proposition \ref{prop:comparison} to show how the two duality results (the one for spatial noncommutative frames and the one for general noncommutative frames) relate to each other.

\section{Preliminaries}
\label{sec:preliminaries}

\subsection{Frames and locales}

Following \cite{picado-pultr}, a \emph{frame} is a complete lattice $L$ that satisfies the infinite distributive law:
\begin{equation*}
(\bigvee_{i \in I} x_i) \meet y ~=~ \bigvee_{i \in I} x_i \meet y
\end{equation*}
for $y \in L$ and $x_i \in L$ for all $i \in I$. A \emph{frame homomorphism} $h:L\to M$ between frames $L$ and $M$  is a map $L\to M$ that preserves all joins (including the bottom $0$) and all finite meets (including the top $1$). We denote the resulting category by $\mathbf{Frm}$.

The \emph{category of locales} $\mathbf{Loc}$ is defined as the opposite of the category $\mathbf{Frm}$. The frame associated to a locale $Y$ will be called the \emph{frame of open subsets of $Y$}, and will be denoted by $\OO(Y)$. A morphism of locales $Y \to Y'$ is by definition a frame morphism $\OO(Y') \to \OO(Y)$. Given a topological space $X$,  the lattice of all open subsets of $X$ is a frame, see \cite[Chapter IX]{maclane-moerdijk-sheaves} or \cite[Chapter II]{johnstone-stone} for details. This frame will also be denoted by $\OO(X)$. In this way, we can associate to each topological space a locale, called the \emph{underlying locale}. Any continuous map of topological spaces $f : X \to X'$ induces a morphism of the underlying locales, defined by taking inverse images of open subsets $f^{-1} : \OO(X') \to \OO(X)$. This defines a functor
\begin{equation*}
i ~:~ \mathbf{Sp} \longrightarrow \mathbf{Loc}.
\end{equation*}

Given a locale $Y$, a \emph{point} of $Y$ is a morphism of locales $\mathbf{1} \to Y$, where $\mathbf{1}$ denotes (the underlying locale of) the one-point space. Equivalently, a point is a frame homomorphism $p:\OO(Y)\to \mathbf{2}$. Here $\mathbf{2}$ denotes the frame of open sets of the one-point space: it has two elements $1>0$. We denote the set of points of $Y$ by $\pt(Y) $. A topology is defined on $\pt(Y)$ by letting the opens  be all sets of the form:
\[
U_a =\{p:\OO(Y)\to \mathbf{2},~ p(a)=1\}
\]
for $a \in \OO(Y)$. With this topology, $\pt(Y)$ is called \emph{the space of points of $Y$}. A morphism of locales $Y\to Y'$ defined by $h: \OO(Y') \to \OO(Y)$ induces a continuous map $\mathrm{pt}(h):\pt(Y)\to \pt(Y')$, $\pt(h)(p)=p\circ h$. This makes $\mathrm{pt}$ into a functor from $\mathbf{Loc}$ to $\mathbf{Sp}$. Note that $(\mathrm{pt}(h))^{-1}(U_a)=U_{h(a)}$.
 
\begin{theorem}[{\cite[II, 1.4]{johnstone-stone}}]
The functor $\pt: \mathbf{Loc}\to \mathbf{Sp}$ is right adjoint to the functor $i:\mathbf{Sp}\to \mathbf{Loc}$.
\end{theorem}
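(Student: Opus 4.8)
The plan is to establish the adjunction $i \dashv \pt$ by producing a natural bijection
\[
\mathrm{Hom}_{\mathbf{Loc}}(i(X), Y) \;\cong\; \mathrm{Hom}_{\mathbf{Sp}}(X, \pt(Y))
\]
for every topological space $X$ and every locale $Y$. Since $\mathbf{Loc} = \mathbf{Frm}^{\op}$ and $\OO(i(X)) = \OO(X)$, the left-hand side is by definition the set of frame homomorphisms $\OO(Y) \to \OO(X)$, so the task reduces to constructing mutually inverse, natural assignments between frame homomorphisms $h : \OO(Y) \to \OO(X)$ and continuous maps $f : X \to \pt(Y)$.

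First I would construct the forward map $\Phi$. Given a frame homomorphism $h : \OO(Y) \to \OO(X)$, define $\Phi(h) : X \to \pt(Y)$ by sending $x \in X$ to the composite $\mathrm{ev}_x \circ h : \OO(Y) \to \mathbf{2}$, where $\mathrm{ev}_x(U) = 1$ if and only if $x \in U$. Each $\mathrm{ev}_x$ is itself a frame homomorphism, since finite meets, arbitrary joins, $0$ and $1$ are computed pointwise in $\OO(X)$; hence $\Phi(h)(x)$ is a genuine point of $Y$. Continuity of $\Phi(h)$ follows from the computation $\Phi(h)^{-1}(U_a) = \{x \in X : \mathrm{ev}_x(h(a)) = 1\} = h(a)$, which is open, where I use that the sets $U_a$ with $a \in \OO(Y)$ form a basis for $\pt(Y)$.

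Conversely I would construct $\Psi$: given a continuous map $f : X \to \pt(Y)$, set $\Psi(f)(a) = f^{-1}(U_a)$. This is well-defined because each $U_a$ is open, and it is a frame homomorphism: the assignment $a \mapsto U_a$ preserves finite meets, arbitrary joins, top and bottom, because every point $p$ preserves these, giving $U_{a \meet b} = U_a \inter U_b$, $U_{\bigvee_i a_i} = \bigcup_i U_{a_i}$, $U_1 = \pt(Y)$ and $U_0 = \varnothing$; and the inverse image $f^{-1}$ preserves all unions and intersections.

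It then remains to verify that $\Phi$ and $\Psi$ are mutually inverse and natural. The inverse relations are direct: $\Psi(\Phi(h))(a) = \Phi(h)^{-1}(U_a) = h(a)$, and $\Phi(\Psi(f))(x)(a) = \mathrm{ev}_x(f^{-1}(U_a)) = 1$ exactly when $f(x) \in U_a$, i.e.\ when $f(x)(a) = 1$, so $\Phi(\Psi(f)) = f$. Naturality in $Y$ I would check using the identity $(\pt(k))^{-1}(U_a) = U_{k(a)}$ recorded above for a frame homomorphism $k$, together with functoriality of inverse image; naturality in $X$ is immediate from precomposition with continuous maps. I expect the only genuine obstacle to be bookkeeping: one must keep careful track of the contravariance introduced by $\mathbf{Loc} = \mathbf{Frm}^{\op}$ so that the naturality squares are oriented correctly. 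Once $\Phi$ and $\Psi$ are in place, every remaining verification is a routine unwinding of the definition of $U_a$ and of evaluation at points.
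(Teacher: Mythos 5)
Your proposal is correct and is essentially the standard argument for this classical result, which the paper does not prove itself but simply cites from Johnstone's \emph{Stone Spaces} (II, 1.4); the hom-set bijection you construct, via $x \mapsto \mathrm{ev}_x \circ h$ in one direction and $a \mapsto f^{-1}(U_a)$ in the other, is exactly the cited proof. The only cosmetic remark is that the sets $U_a$ are not merely a basis but constitute \emph{all} the opens of $\pt(Y)$ (they are already closed under finite intersections and arbitrary unions), which only makes your continuity check easier.
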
 

A locale $Y$ is called \emph{spatial} if it is in the essential image of $i$, i.e.\ if it occurs as underlying locale of some topological space. A topological space is called \emph{sober} if it is in the essential image of $\mathrm{pt} : \mathbf{Loc} \to \mathbf{Sp}$, i.e.\ if it is isomorphic to the space of points of some locale. By the general theory of adjunctions, $i$ and $\mathrm{pt}$ induce an equivalence of categories between sober topological spaces and spatial locales.

For $X$ a topological space, the sober topological space $\hat{X} = \pt(i(X))$ will be called the \emph{sobrification of $X$}. From the adjunction above, there is a natural map $X \to \hat{X}$ inducing an isomorphism of underlying locales.

For more on locales and sober topological spaces, we refer to \cite[Chapter IX]{maclane-moerdijk-sheaves} or \cite[Chapter II]{johnstone-stone}.

\subsection{Skew lattices}

A skew lattice is a set $A$ endowed with a pair of idempotent, associative operations  $\land$ and $\lor$ such that the absorption laws
\begin{equation*}
x\land (x\lor y)=x=x\lor (x\land y)\text{ and } (x\land y)\lor y=y=(x\lor y)\land y
\end{equation*}
are satisfied. Given skew lattices $A$ and $A'$, a \emph{homomorphism} of skew lattices is a map $f:A\to A'$ that preserves finite meets and joins, or in other words:
\begin{itemize}
\item $f(a\land b)=f(a)\land f(b)$, for all $a,b\in A$;
\item $f(a\lor b)=f(a)\lor f(b)$, for all $a,b\in A$.
\end{itemize}
The \emph{natural partial order} is defined on any skew lattice $A$ by: $a\leq b$ iff $a\land b=b\land a=a$, or equivalently, $a\lor b=b=b\lor a$. \emph{Green's equivalence relation $\DD$} is defined on $A$ by: $a~\DD~b$ iff $a\land b\land a=a$ and $b\land a\land b=b$, or equivalently, $a\lor b\lor a=a$ and $b\lor a\lor b=b$. We denote the equivalence class of an element $a \in A$ by $\DD_a$ or $[a]$. Leech's First Decomposition Theorem \cite{jl-rings}, states that $\DD$ is a congruence on the skew lattice $A$ and that $A/\DD$ is the maximal lattice image of $A$. We will also refer to $A/\mathcal{D}$ as the \emph{commutative shadow} of $A$. 

A skew lattice is called \emph{left handed}, if it satisfies the identity $x\land y\land x=x\land y,$ or equivalently, $x\lor y\lor x=y\lor x$; it is called \emph{right-handed} if it satisfies the identity $x\land y\land x=y\land x$, or equivalently, $x\lor y\lor x=x\lor y$. By Leech's Second Decomposition Theorem any skew lattice factors as a pullback of a left handed skew lattice by a right-handed skew lattice over their common maximal lattice image, see \cite{jl-rings}.

A skew lattice is said to be \emph{strongly distributive} if it satisfies the identities:
\[
(x\lor y)\land z = (x\land z)\lor (y\land z) \text{ and } x\land (y\lor z)=(x\land y)\lor (x\land z).
\]
Let $a$ be an element of a strongly distributive skew lattice $A$. If $u$ is a $\mathcal{D}$-class with $u \leq a$, then there exists a unique element $b \in A$ such that $b \leq a$ and $[b]=u$. We will call $b$ the \emph{restriction of $a$ to $u$}.

As shown by Leech in \cite{jl-normal}, a skew lattice is strongly distributive if and only if it is symmetric, distributive and normal, where a skew lattice $A$ is called:
\begin{itemize}
 \item \emph{symmetric} if for any $x,y\in A$, $x\lor y=y\lor x$ is equivalent to $x\land y=y\land x$;
 \item \emph{distributive} if it satisfies  the identities:
\begin{eqnarray*}
x\land (y\lor z)\land x=(x\land y\land x)\lor (x\land z\land x)\\
x\lor (y\land z)\lor x=(x\lor y\lor x)\land (x\lor z\lor x);
\end{eqnarray*}
\item \emph{normal} if it satisfies the identity $x\land y\land z\land x=x\land z\land y\land x$.
\end{itemize}
A skew lattice $A$ is normal if and only if given any $a\in A$ the set
\[
a\!\downarrow~ =\{u\in A\,|\, u\leq a\}
\]
is a lattice, see \cite{jl-normal}. Normal skew lattices are sometimes called \emph{local lattices}. Finally, a \emph{skew lattice with $0$} is a skew lattice with a distinguished element $0$ satisfying $x\lor 0=x=0\lor x$, or equivalently, $x\land 0=0=0\land x$.

\begin{example}\label{ex:partial-func}
Let $R,S$ be non-empty sets and denote by $\PP(R,S)$ the set of all partial functions from $R$ to $S$. We define the following operations on $\PP(R,S)$:
\begin{eqnarray*}
f \land g & = &\restricted{f}{\dom(f) \cap \dom(g)}\\
f \lor g& = &g\cup \restricted{f}{\dom(f) \setminus \dom(g)}
\end{eqnarray*}
In \cite{jl-normal}, Leech showed that $(\PP(R,S);\land, \lor)$ is a strongly distributive left handed skew lattice with $0$. Moreover, given $f,g\in (\PP(R,S);\land, \lor)$ the following hold:
\begin{itemize}
\item $f\, \DD\, g$ iff $\dom(f)=\dom(g)$;
\item $f\leq g$ iff $f=\restricted{g}{\dom(f) \cap \dom(g)}$;
\item $\PP(R,S)/\DD\cong \PP(R).$
\end{itemize}
We will see in the next subsection that $\PP(R,S)$ is a noncommutative frame.
\end{example}

\subsection{Noncommutative frames}  \label{ssec:ncframes}

A subset $\{x_i : i\in I\} \subseteq A$ of a skew lattice $A$ is said to be a \emph{commuting subset} if it is nonempty and moreover $x_i\land x_j=x_j\land x_i$ and $x_i\lor x_j=x_j\lor x_i$ for all $i,j\in I$. We say that a skew lattice is \emph{join complete} if all commuting subsets have suprema with respect to the natural partial ordering. Leech showed in \cite{jl-boolean} that a join complete skew lattice $A$ always has a maximal $\mathcal D$-class.

A \emph{noncommutative frame} is a strongly distributive, join complete skew lattice $A$ with $0$ that satisfies the infinite distributive laws
\begin{equation}\label{eq:inf-dist-law}
 (\bigvee_i x_i)\land y=\bigvee_i (x_i\land y) \qquad \text{and} \qquad
x\land ({\bigvee_i y_i})=\bigvee_i (x\land y_i)
\end{equation}
for all $x,y\in A$ and all commuting subsets $\{x_i : i\in I\}$, $\{y_i : i\in I\}\subseteq A$.  

By a result of \cite{jl-discriminator}, any join complete, normal skew lattice $A$ with $0$ (for instance, any noncommutative frame) satisfies the following:
\begin{itemize}
\item any nonempty commuting subset $C\subseteq A$ has an infimum w.r.t.\ the natural partial order, to be denoted by $\bigwedge C$;
\item any nonempty subset $C\subseteq A$ has an infimum w.r.t.\ the natural partial order, to be denoted by $\bigcap C$ (or  by $x\cap y$ in the case $C=\{x,y\}$);
\item if $C$ is a nonempty commuting subset of $A$, then $\bigwedge C=\bigcap C$.
\end{itemize}
We call $\bigcap C$ the \emph{intersection} of $C$.

The following is an example of a noncommutative frame (it is easy to check that it satisfies all necessary properties).

\begin{proposition} \label{prop:partial-functions-is-ncframe}
The set $\PP(R,S)$ of all partial functions from $R$ to $S$ with the operations $\meet$, $\join$ defined as in Example \ref{ex:partial-func} is a noncommutative frame.
\end{proposition}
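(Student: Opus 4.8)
The plan is to invoke Example~\ref{ex:partial-func}, where Leech's result already gives that $(\PP(R,S);\meet,\join)$ is a strongly distributive left-handed skew lattice with $0$. Thus the only properties left to establish are that $\PP(R,S)$ is join complete and that it satisfies the two infinite distributive laws in \eqref{eq:inf-dist-law}.

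First I would identify the commuting subsets. Since a strongly distributive skew lattice is symmetric, for $f,g \in \PP(R,S)$ the equality $f \meet g = g \meet f$ holds if and only if $f \join g = g \join f$, and unwinding the definitions this common condition is precisely that $f$ and $g$ agree on $\dom(f) \cap \dom(g)$; call such a pair \emph{compatible}. Hence a subset $\{f_i : i \in I\}$ is commuting exactly when the $f_i$ are pairwise compatible, and in that case the union $\bigcup_{i} f_i$ is a well-defined partial function with domain $\bigcup_i \dom(f_i)$.

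Next I would show that this union is the supremum. Using the order description $f \leq g$ iff $f = \restricted{g}{\dom(f) \cap \dom(g)}$ from Example~\ref{ex:partial-func} (equivalently, $f$ is a restriction of $g$), each $f_i$ is a restriction of $f := \bigcup_i f_i$, so $f_i \leq f$; and any upper bound $h$ must extend every $f_i$, hence extends $f$, giving $f \leq h$. Therefore $\bigvee_i f_i = \bigcup_i f_i$, which establishes join completeness.

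Finally, the infinite distributive laws reduce to a pointwise check on domains and values. For the first law I would note that $\{x_i \meet y : i \in I\}$ is again a commuting family (restrictions of compatible functions remain pairwise compatible), so the right-hand side is defined, and then verify that both $(\bigvee_i x_i) \meet y$ and $\bigvee_i (x_i \meet y)$ equal the partial function with domain $(\bigcup_i \dom(x_i)) \cap \dom(y)$ sending $r$ to $x_k(r)$ for any $k$ with $r \in \dom(x_k)$; the second law is symmetric. These computations are routine, and there is no deep obstacle here; the only genuinely non-automatic points are the correct characterization of commuting families and the identification of their suprema with unions, which is exactly what makes the right-hand sides of the infinite distributive laws well-posed in the first place.
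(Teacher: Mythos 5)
Your proof is correct: the paper itself offers no argument beyond the remark that ``it is easy to check that it satisfies all necessary properties,'' and your write-up supplies exactly the routine verification being left to the reader. The two non-automatic observations --- that a subset of $\PP(R,S)$ is commuting precisely when its members are pairwise compatible, and that the supremum of such a family is the union --- are identified and justified correctly, after which join completeness and both infinite distributive laws follow by the pointwise computations you describe.
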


If $\{x_i : i \in I \} \subseteq A$ is a commuting subset, and $h : A \to A'$ is a homomorphism of skew lattices, then $\{ h(x_i) : i \in I \}$ is a commuting subset of $A'$. For $A$ and $A'$ noncommutative frames, we say that $h : A \to A'$ is a \emph{morphism of noncommutative frames} if it satisfies the following properties:
\begin{itemize}
\item $h$ is a homomorphism of skew lattices;
\item $h(0)=0$;
\item if $t$ is in the maximal $\mathcal{D}$-class of $A$, then $h(t)$ is in the maximal $\mathcal{D}$-class of $A'$;
\item $h(\bigvee_i x_i)=\bigvee _i h(x_i)$, for all commuting subsets $\{x_i : i\in I\}$.
\end{itemize}
The category of noncommutative frames will be denoted by $\mathbf{NFrm}$, and the full subcategory of left handed noncommutative frames by $\mathbf{LNFrm}$.

Note that a morphism $h: A \to A'$ of noncommutative frames are compatible with the congruence $\DD$, in other words $a~\DD~b$ implies $h(a)~\DD~h(b)$. In fact, the induced map ${h}_c:A/\DD\to A'/\DD$, $[x] \mapsto [h(x)]$ is a frame morphism. Moreover, given a noncommutative frame, the natural projection $\pi_A:A\to A/\DD$, mapping $x$ to its $\DD$-class $[x]$, is always a morphism of noncommutative frames.

\section{Duality for spatial noncommutative frames}
\label{sec:duality-spatial-ncframe}

\subsection{The front topology}

Let $Y$ be a topological space. We say that a subset $S \subseteq Y$ is \emph{locally closed} if it can be written as $S = U \cap V$ with $U$ open and $V$ closed. The locally closed sets are the basis of a topology, that we will call the \emph{front topology}. This is the terminology of Simmons \cite{simmons} that is also used in the recent paper by \'{A}vila, Bezhanishvili, Morandi and Zald\'{i}var \cite{frame-of-nuclei-spatial}. We write $Y_f$ for $Y$ equipped with the front topology.

\begin{example} \ 
\begin{enumerate}
\item If $Y$ is Hausdorff, then all points in $Y$ are closed. As a result, $Y_f$ has the discrete topology.
\item Take $Y = \Spec(\mathbb{Z})$ with the Zariski topology. Then $Y_f$ is homeomorphic to 
\[
\{0\} \cup \{ \frac{1}{n} : n \in \mathbb{N}_{>0} \} ~\subseteq~ \mathbb{R}
\]
with the usual (Euclidean) topology.
\end{enumerate}
\end{example}

\subsection{Terminology for sheaves}
\label{ssec:sheaves}
We recall the main notions of sheaf theory, and introduce the necessary terminology. For details, we refer to \cite{maclane-moerdijk-sheaves}. For $Y$ a topological space, a \emph{presheaf on $Y$} is a functor
\[
\EE : \OO(Y)^\op \longrightarrow \mathbf{Sets}
\]
to the category of sets. For $U \in \OO(Y)$, the elements of $\EE(U)$ will be called the \emph{(local) sections over $U$}. For $s \in \EE(U)$, we also say that \emph{$U$ is the domain of $s$} and we write $U = \dom(s)$. A family of elements $(s_i)_{i \in I}$ with $s_i \in \EE(U_i)$ is called a \emph{matching family} if for all $i,j \in I$ we have
\begin{equation*}
s_i|_{U_i \cap U_j} = s_j|_{U_i \cap U_j}.
\end{equation*}
Now $\EE$ is a \emph{sheaf on $Y$} if and only if for every matching family $(s_i)_{i \in I}$ as above, there is a unique section $s \in \EE(U)$ for $U = \bigcup_{i \in I} U_i$, such that $s|_{U_i}  = s_i$ for all $i \in I$.

For an element $p \in Y$, we define the \emph{stalk} $\EE_p$ as the filtered colimit
\begin{equation*}
\EE_p ~=~ \varinjlim_{U \ni p} \EE(U).
\end{equation*}
So every section $s \in \EE(U)$ with $U \ni p$ determines an element of $\EE_p$, that we will call the \emph{germ of $s$ at $p$}, and we write it as $\mathrm{germ}_p(s)$. For $s \in \EE(U)$ and $s' \in \EE(U')$ with $p \in U \cap U'$ we have $\mathrm{germ}_p(s) = \mathrm{germ}_p(s')$ if and only if there is some $p \in V \subseteq U \cap U'$ such that $s|_V = s'|_V$.

The \emph{\'etale space} $E$ of $\EE$ is (as a set) the disjoint union
\begin{equation*}
E = \bigsqcup_{p \in Y} \EE_p.
\end{equation*}
There is a projection map $\pi : E \to Y$ sending each element of $\EE_p$ to $p$.
Each section $s \in \EE(U)$ defines a function $s : U \to E$ given by 
\begin{equation} \label{eq:section-as-function}
s(p) = \mathrm{germ}_p(s)
\end{equation}
and this function satisfies $\pi(s(p)) = p$ for all $p \in U$. We define a topology on $E$ by taking as subbasis the set of all sets of the form $s(U)$ for $U \subseteq Y$ an open subset and $s \in \EE(U)$. Then $\pi$ is a local homeomorphism, each function $s : U \to E$ defined by $(\ref{eq:section-as-function})$ is continuous, and moreover:
\begin{proposition}
Let $\EE$ be a sheaf on $Y$ and let $U \subseteq Y$ be an open subset. For every continuous function $f : U \to E$ such that $\pi(f(p)) = p$ for all $p \in U$, there is a unique $s \in \EE(U)$ such that
\begin{equation*}
f(p) = \mathrm{germ}_p(s).
\end{equation*}
\end{proposition}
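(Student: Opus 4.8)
The plan is to reconstruct $s$ locally out of $f$ and then glue, using the sheaf axioms together with the explicit description of the topology on $E$ by the subbasis $\{\, s(U) : U \in \OO(Y),\ s \in \EE(U) \,\}$.

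First I would prove local existence. Fix $p \in U$. Since $f(p) \in \EE_p$ and the stalk is the filtered colimit $\varinjlim_{V \ni p} \EE(V)$, there is an open $V_p \ni p$ and a section $t_p \in \EE(V_p)$ with $\mathrm{germ}_p(t_p) = f(p)$; equivalently, in the function notation of $(\ref{eq:section-as-function})$, $t_p(p) = f(p)$. The image $t_p(V_p)$ is a subbasic (hence open) subset of $E$ containing $f(p)$, so by continuity of $f$ the set $f^{-1}(t_p(V_p))$ is an open neighborhood of $p$. Put $W_p = V_p \cap f^{-1}(t_p(V_p))$. For $q \in W_p$ we have $f(q) \in t_p(V_p)$, so $f(q) = \mathrm{germ}_{q'}(t_p)$ for some $q' \in V_p$; applying $\pi$ and using $\pi(f(q)) = q$ forces $q' = q$, whence $f(q) = \mathrm{germ}_q(t_p)$. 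Writing $s_p = \restricted{t_p}{W_p} \in \EE(W_p)$, we thus obtain an open cover $\{W_p\}_{p \in U}$ of $U$ with $f(q) = \mathrm{germ}_q(s_p)$ for all $q \in W_p$. I expect this step --- converting the topological hypothesis that $f$ is continuous into the statement that $f$ locally coincides with an honest section of $\EE$ --- to be the main obstacle, since it is exactly here that the subbasis of $E$ and the relation $\pi \circ f = \id_U$ must be combined.

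Next I would check that $(s_p)_{p \in U}$ is a matching family. For $p, p' \in U$ and any $q \in W_p \cap W_{p'}$ we have $\mathrm{germ}_q(s_p) = f(q) = \mathrm{germ}_q(s_{p'})$, so by the characterization of equal germs there is an open $V \ni q$ contained in $W_p \cap W_{p'}$ on which $s_p$ and $s_{p'}$ agree. Thus $s_p$ and $s_{p'}$ agree locally at every point of the overlap, and the uniqueness (separation) part of the sheaf condition applied over $W_p \cap W_{p'}$ gives $\restricted{s_p}{W_p \cap W_{p'}} = \restricted{s_{p'}}{W_p \cap W_{p'}}$, which is precisely the matching condition.

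Finally I would glue and verify. Since $\EE$ is a sheaf, there is a unique $s \in \EE(U)$ with $\restricted{s}{W_p} = s_p$ for all $p$. For any $q \in U$, choosing $p$ with $q \in W_p$ gives $\mathrm{germ}_q(s) = \mathrm{germ}_q(s_p) = f(q)$, so $s$ has the required property. For uniqueness, if $s, s' \in \EE(U)$ both satisfy $\mathrm{germ}_q(s) = f(q) = \mathrm{germ}_q(s')$ for every $q \in U$, then $s$ and $s'$ have equal germs at every point, hence agree on a neighborhood of each point, and separation forces $s = s'$.
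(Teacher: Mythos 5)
Your proof is correct, and it is the standard argument: use continuity of $f$ against the subbasic opens $t_p(V_p)$ of $E$ together with $\pi\circ f=\id_U$ to show $f$ locally agrees with the germ map of an honest section, then assemble a matching family and glue, with uniqueness following from separatedness. The paper itself offers no proof of this proposition --- it is recalled as a standard fact about \'etale spaces with a pointer to Mac Lane--Moerdijk --- and your argument is precisely the textbook one, with the one genuinely delicate point (forcing $q'=q$ by applying $\pi$) handled correctly.
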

In this way, we can recover $\EE$ from its \'etale space. Further, each local homeomorphism $\pi : E \to Y$ is the \'etale space of a sheaf $\EE$. This equivalence between sheaves and \'etale spaces is functorial: if $\phi : \EE \to \EE'$ is a morphism of sheaves over $Y$, then
\begin{equation*}
\psi : E \longrightarrow E',\quad \psi(\mathrm{germ}_p(s)) = \mathrm{germ}_p(\phi(s))
\end{equation*}
is a well-defined continuous map between the respective \'etale spaces, such that $\pi' \circ \psi = \pi$. Conversely, every continuous $\psi : E \to E'$ such that $\pi' \circ \psi = \pi$ is of this form for a unique $\phi$.

The definitions for presheaves, sections, matching families and sheaves extend word for word to the more general case where $Y$ is a locale. For every point $p \in \pt(Y)$, we can define the stalk $\EE_p = \varinjlim_{U \ni p} \EE(U)$ of a (pre)sheaf $\EE$ and the germ of a section $s \in \EE(U)$ at $p \in U$ (if $p$ is interpreted as a frame morphism $p : \OO(Y) \longrightarrow \mathbf{2}$, then the condition $p \in U$ means $p(U)=1$). The \'etale space $E$ is then a topological space with a projection map
\begin{equation*}
\pi : E \longrightarrow \pt(Y).
\end{equation*}
The main difference with the case of topological spaces, is that we cannot recover the sheaf $\EE$ from its \'etale space $E$. So here sheaves are more general than \'etale spaces (one can remedy this by defining local homeomorphisms of locales, but we will not follow this approach here).

\subsection{Spatial noncommutative frames}
\label{ssec:spatial-definition}

Let $\EE$ be a sheaf on $Y_f$ such that $\EE(Y_f) \neq \varnothing$. Recall from \cite{nctop} that we can then construct a noncommutative frame $H(Y,\EE)$ as follows. The elements are pairs $(U,s)$ with $U \subseteq Y$ open (for the original topology on $Y$) and $s \in \EE(U)$. The meet and join operations are defined as follows:
\begin{itemize}
\item $(U,s) \meet (V,t) = (U \cap V, s|_{U \cap V})$,
\item $(U,s) \join (V,t) = (U \cup V, s|_{U-V} \cup t)$,
\end{itemize}
where $s|_{U-V} \cup t$ is the unique section restricting to $s|_{U-V}$ on $U-V$ and to $t$ on $V$. It is easy to verify that $H(Y,\EE)$ is a noncommutative frame. In fact, $H(Y,\EE)$ can be seen as a subset of the noncommutative frame of all partial functions from $X$ to 
\begin{equation*}
E=\bigsqcup_{p\in Y} \EE_p=\{\mathrm{germ}_p(s)~:~ p\in Y,~ s\in \EE(U) ~\text{for }U \ni p \},
\end{equation*}
by identifying $(U,s)$ with the function 
\begin{equation*}
s : U \longrightarrow E,\quad s(p) = \mathrm{germ}_p(s).
\end{equation*}
It therefore suffices to show that $H(Y,\EE)$  is closed under the meet and join operations, which is the case exactly because we work with the front topology.

Observe that given any global section $t$, each downset of the form $(Y,t) {\downarrow}$ is isomorphic to the frame $\OO(Y)$.  Moreover,
$(U,s)~\mathcal{D} ~(U',s')$ if and only if $U=U'$.
It follows that $H(Y,\EE)$  has a unique bottom element  $0 = (\emptyset, \emptyset)$, and it  has a top $\DD$-class  $T=\{ (Y,t) : t \in \EE(Y_f) \}$, in other words top elements correspond to global sections.
The factor algebra $H(Y,\EE) / \mathcal{D}$ is isomorphic to the frame $\OO(Y)$. We  view $H(Y,\EE)$ as the set of opens of a noncommutative topological space with commutative shadow $Y$.

\begin{definition}[Spatial noncommutative frames]
A left-handed noncommutative frame $A$ will be called \emph{spatial} if there exists a topological space $Y$ and a sheaf $\EE$ on $Y_f$ with $\EE(Y_f) \neq \varnothing$ such that $A \cong H(Y,\EE)$ as above.
\end{definition}

If $A$ is a frame, i.e.\ $A = A/\mathcal{D}$, then this coincides with the usual definition: $A$ is a spatial frame if and only if $A = \OO(Y)$ for some topological space $Y$.

Take a topological space $Y$ and a sheaf $\EE$ on $Y_f$. The natural map $Y \to \hat{Y}$ from $Y$ to its sobrification induces a map $\beta : Y_f \to (\hat{Y})_f$. It can now be checked that $H(Y,\EE) \cong H(\hat{Y},\beta_*\EE)$. So in the above definition, we can assume that $Y$ is sober.

\subsection{Examples of noncommutative frames that are not spatial}
\label{ssec:not-spatial}

Let $A$ be a noncommutative frame such that its commutative shadow $A/\mathcal{D}$ is not spatial. Then $A$ can not be spatial either. However, in this subsection we would like to give two examples of left-handed noncommutative frames $A$ that are not spatial, despite having a spatial commutative shadow. 

Consider the set $B$ of pairs $(U,f)$ with $U \subseteq \mathbb{R}$ an open set for the usual (Euclidean) topology, and $f : U \to \{0,1\}$ an arbitrary function. We define an equivalence relation $\sim$ such that $(U,f) \sim (V,g)$ if and only if $V = U$ and moreover $\{x \in U : f(x) \neq g(x)\}$ is countable (by countable we always mean either finite or countably infinite). We now define a noncommutative frame $A'$ with as elements the equivalence classes
\begin{equation*}
A' = B/\!\sim
\end{equation*}
and with meet and join defined by
\begin{equation*}
\begin{split}
&(U,f) \meet (V,g) = (U\cap V,f|_{U \cap V}) \\
&(U,f) \join (V,g) = (U\cup V,f|_{U - V} \cup g|_V),
\end{split}
\end{equation*}
where $h = f|_{U - V} \cup g|_V$ is the function defined by $h(x) = f(x)$ for $x \in U-V$ and $h(x) = g(x)$ for $x \in V$. Note that meet and join do not depend on the chosen representatives. It is straightforward to check that $A'$ is a strongly distributive skew lattice with $0$, with $A'/\mathcal{D} \cong \OO(\mathbb{R})$. From \cite[Theorem 5.1]{completeness-issues} it follows that $A'$ is a noncommutative frame if we show that it is join complete. So take a commuting family of elements $(U_i,f_i)$ indexed by $i \in I$. Then $f_i|_{U_i \cap U_j}$ and $f_j|_{U_i \cap U_j}$ disagree on only countably many points. Let $U = \bigcup_{i \in I} U_i$. Since $\mathbb{R}$ is strongly Lindel\"of, we can find an countable subset $J \subseteq I$ such that $U = \bigcup_{j \in J} U_j$. For each $p \in U$, take a $j(p) \in J$ such that $p \in U_{j(p)}$. Then define:
\begin{equation*}
f: U \to \{0,1\},\quad f(p) = f_{j(p)}(p).
\end{equation*}
We claim that $(U,f) = \bigvee_{i \in I} (U_i,f_i)$. Consider the set
\begin{equation*}
S_i = \{ p \in U_i : f(p) \neq f_i(p) \}.
\end{equation*}
We have to show that $S_i$ is countable. This follows from:
\begin{equation*}
S_i \subseteq \bigcup_{j \in J} \{ p \in U_i \cap U_j : f_i(p) \neq f_j(p) \}
\end{equation*}
(the right hand side is countable, being a countable union of countable sets). Now let $g : U \to \{0,1\}$ be any other function such that $g|_{U_i}$ and $f_i$ agree outside of a countable set. Consider the set
\begin{equation*}
S' = \{ p \in U : f(p) \neq g(p) \}.
\end{equation*}
Then
\begin{equation*}
S' \subseteq \bigcup_{j \in J} \{ p \in U_j : f_j(p) \neq g(p) \}
\end{equation*}
so $S'$ is countable. This shows that $(U,f)$ is well-defined and $(U,f) = \bigvee_{i \in I} (U_i,f_i)$. It follows that $A'$ is a noncommutative frame. We claim that $A'$ is not spatial, but we will postpone the proof to Subsection \ref{ssec:proof-not-spatial}.

As another example, consider the set $A''$ of pairs $(U,f)$ with $U \subseteq \mathbb{R}$ an open subset, and $f : U \to \{0,1\}$ a function such that $\{x \in U : f(x) = 1 \}$ is countable. Again, we define meet and join as
\begin{equation*}
\begin{split}
&(U,f) \meet (V,g) = (U\cap V,f|_{U \cap V}) \\
&(U,f) \join (V,g) = (U\cup V,f|_{U - V} \cup g|_V).
\end{split}
\end{equation*}
In an analogous way as in the previous example, we can show that $A''$ is a join complete strongly distributive skew lattice with commutative shadow $A''/\mathcal{D} \cong \OO(\mathbb{R})$. So by \cite[Theorem 5.1]{completeness-issues} it is a noncommutative frame. Again, we claim that $A''$ is not spatial, but we postpone the proof to Subsection \ref{ssec:proof-not-spatial}.

\subsection{$H$ as a functor} \label{ssec:H-as-functor}

We can interpret the map $(Y,\EE) \mapsto H(Y,\EE)$ from Subsection \ref{ssec:spatial-definition} as a functor $H$, in the following way. We define the category $\sh(\mathsf{Sp}_f)$ of \emph{sheaves for the front topology} as the category with
\begin{itemize}
\item as objects the pairs $(Y,\EE)$ where $Y$ is a topological space and $\EE$ is a sheaf on $Y_f$ such that $\EE(Y_f) \neq \varnothing$;
\item as morphisms the pairs $(f,\lambda) : (Y,\EE) \to (Y',\EE')$ where $f : Y \to Y'$ is a continuous map and $\lambda : \EE' \to f_*\EE$ is a sheaf morphism (note that $f$ also defines a continuous map $Y_f \to (Y')_f$).
\end{itemize}
Now $H$ becomes a functor
\begin{equation*}
H : \sh(\mathsf{Sp}_f)^\op \longrightarrow \mathbf{LNFrm}
\end{equation*}
by associating to $(f,\lambda)$ the morphism of noncommutative frames
\begin{equation*}
H(f,\lambda) : H(Y',\EE') \longrightarrow H(Y,\EE)
\end{equation*}
sending $(U,s) \in H(Y',\EE')$ to $(f^{-1}(U),\lambda(s)) \in H(Y,\EE)$.

\subsection{Primitive quotients} \label{ssec:primitive-quotients} We would like to construct a left adjoint $G$ to the functor $H$ above. First, we recall the notion of primitive quotient from \cite{kcv-priestley}.

\begin{definition}[{\cite{kcv-priestley}}] \label{def:equivalence-rel}
Let $A$ be a left-handed strongly distributive skew lattice and let $p : A \to \mathbf{2}$ be a lattice homomorphism. For $a,b \in A$ with $p(a) = p(b) = 1$, we define
\begin{equation*}
a \sim_p b \quad\Leftrightarrow\quad \exists c,d \in A,~ p(c)=0,~p(d)=1,~ (a \meet d) \join c = (b \meet d) \join c.
\end{equation*}
If $a \sim_p b$, then we say that $a$ and $b$ \emph{agree in $p$}.
\end{definition}

Recall from \cite[Subsection 6.2]{kcv-priestley} that $\sim_p$ is an equivalence relation. The equivalence class of an element $a$ is written as $[a]_{\sim_p}$. By \cite[Proposition 6.1]{kcv-priestley}, the quotient $A/\!\!\sim_p$ is a skew lattice with a unique nontrivial $\mathcal{D}$-class. The quotient map is given by
\begin{gather*}
\pi : A \to A/\!\!\sim_p \\
\pi(a) = \begin{cases}
0 & \text{if }p(a) = 0, \\
[a]_{\sim_p} & \text{if }p(a) = 1.
\end{cases}
\end{gather*}
It is easy to check that $\pi$ is a morphism of noncommutative frames if and only if $p$ is.

A skew lattice that only has one nontrivial $\mathcal{D}$-class, is called \emph{primitive}. Primitive left-handed skew lattices are denoted by $P_T$, where $T$ is the set of top elements. Any morphism $q : A \to P_T$ to a primitive skew lattice such that $q/\mathcal{D} = p$ factors as $q = t \circ \pi$ for some $t : A/\!\!\sim_p~ \to P_T$, see \cite[Proposition 6.1]{kcv-priestley}.

For spatial noncommutative frames $A = H(Y,\EE)$, the equivalence relation can be phrased in terms of the stalks of $\EE$.

\begin{lemma} \label{lmm:stalks-vs-equivalence-rel}
Take $(U,s),(U',s') \in H(Y,\EE)$ and $p \in U \cap U'$. Then 
\begin{equation*}
(U,s) \sim_p (U',s')
\end{equation*}
if and only if $\mathrm{germ}_p(s) = \mathrm{germ}_p(s')$.
\end{lemma}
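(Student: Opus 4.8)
The plan is to translate the purely algebraic relation $\sim_p$ into the geometric condition on germs by unwinding the definitions of $\meet$ and $\join$ in $H(Y,\EE)$, keeping in mind that here the point $p \in U \cap U'$ is identified with the lattice homomorphism $A \to \mathbf{2}$ sending $(V,t)$ to $1$ if $p \in V$ and to $0$ otherwise; this is the composite of $\pi_A : A \to A/\DD = \OO(Y)$ with evaluation at $p$, so it is indeed a lattice homomorphism. Under this identification the conditions $p(c) = 0$ and $p(d) = 1$ in Definition \ref{def:equivalence-rel} become $p \notin W$ and $p \in D$, where $c = (W,w)$ and $d = (D,e)$.

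For the implication ($\Leftarrow$), suppose $\mathrm{germ}_p(s) = \mathrm{germ}_p(s')$. By definition of the stalk there is a front-open neighbourhood of $p$ inside $U \cap U'$ on which $s$ and $s'$ restrict to the same section; since the locally closed sets form a basis of the front topology, I may shrink it to a basic one $V = O \cap C$ with $O$ open, $C$ closed, $p \in V \subseteq U \cap U'$ and $s|_V = s'|_V$. The key point is now to manufacture the witnesses $c,d \in A$, whose domains must be open in the original topology, out of this merely locally closed neighbourhood. I take $D = O$ (so $p \in D$) and $W = \bigl((U \cup U') \cap O\bigr) \setminus C$, which is open and avoids $p$ since $p \in C$; for the sections I restrict a fixed global section $g \in \EE(Y_f)$, setting $e = g|_O$ and $w = g|_W$. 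A direct computation then gives $(U \cap D) \cup W = V \cup W = (U' \cap D) \cup W$, because $W$ already contains the part of $U \cap O$ (resp.\ $U' \cap O$) lying outside $C$, leaving exactly $V$ uncovered. On $V$ both resulting sections equal $s|_V = s'|_V$ and on $W$ both equal $w$, so by separatedness of $\EE$ they coincide; hence $(a \meet d) \join c = (b \meet d) \join c$ and $(U,s) \sim_p (U',s')$.

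For the implication ($\Rightarrow$), suppose witnesses $c = (W,w)$ and $d = (D,e)$ are given with $p \notin W$, $p \in D$, and $(a \meet d) \join c = (b \meet d) \join c$. I restrict this equality of sections to the set $N = (U \cap U' \cap D) \setminus W$, which is a front-open neighbourhood of $p$: it is the original-open set $U \cap U' \cap D$ intersected with the closed set $Y \setminus W$, hence locally closed, and it contains $p$. On $N$ the left-hand section equals $s|_N$ and the right-hand section equals $s'|_N$, because $N$ lies inside both $(U \cap D)\setminus W$ and $(U' \cap D)\setminus W$, the regions where the overwrite by $c$ leaves $s$, respectively $s'$, untouched. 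Equality of the two sides therefore forces $s|_N = s'|_N$, and since $N$ is a front-open neighbourhood of $p$ this yields $\mathrm{germ}_p(s) = \mathrm{germ}_p(s')$.

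The only real obstacle is the one addressed in the ($\Leftarrow$) direction: the relation $\sim_p$ is defined using elements of $A$, whose domains are open in the original topology, whereas equality of germs only supplies a locally closed neighbourhood $V = O \cap C$. The device that overcomes this is to let $d$ cut down to the open part $O$ and let $c$ overwrite everything outside the closed part $C$, so that the surviving region is precisely $V$, where $s$ and $s'$ are known to agree. This is exactly the place where passing to the front topology is indispensable.
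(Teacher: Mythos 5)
Your proof is correct, and the mechanism is the expected one: the witnesses $d$ (cutting down to the open part $O$) and $c$ (overwriting the complement of the closed part $C$) isolate exactly the locally closed neighbourhood on which the germs agree, and conversely the set $(U\cap U'\cap D)\setminus W$ is a front-open neighbourhood of $p$ on which the two sides of the defining identity force $s=s'$. The paper itself gives no details here, deferring to the analogous Lemma 5.1 of \cite{kcv-priestley}, so your write-up simply supplies the direct verification that reference encodes.
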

\begin{proof}
This is analogous to the proof of \cite[Lemma 5.1]{kcv-priestley}.
%
%Suppose that $a$ and $b$ have the same germ at $p$. Then we can find a locally closed set $Z \subseteq U \cap U'$ containing $p$, such that $s|_Z = s'|_Z$. We write $Z = V \cap W^c$ for $V,W \in \OO(Y)$, $W \subseteq V$. Take $(V,s_V)$ and $(W,s_W)$ in $H(Y,\EE)$ with $s_V$ and $s_W$ arbitrary. Then:
%\begin{equation*}
%((U, s) \meet (V, s_V)) \join (W , s_W) = ((U', s') \meet (V, s_V)) \join (W, s_W).
%\end{equation*}
%This shows $(U,s) \sim_p (U',s')$.
%
%Conversely, suppose that $(U,s) \sim_p (U',s')$. Take $(V,s_V)$ and $(W,s_W)$ in $H(Y,\EE)$ with $p \in V \cap W^c$ such that
%\begin{equation*}
%((U, s) \meet (V, s_V)) \join (W , s_W) = ((U', s') \meet (V, s_V)) \join (W, s_W).
%\end{equation*}
%Then $s$ and $s'$ agree after restriction to $U \cap U' \cap V \cap W^c$, which is an open set in $Y_f$ containing $p$. So $\mathrm{germ}_p(s) = \mathrm{germ}_p(s')$.
\end{proof}

\subsection{\'Etale space associated to a noncommutative frame}
\label{ssec:etale-space}

Let $A$ be a left-handed noncommutative frame. We would like to construct an object $G(A)$ in $\sh(\mathsf{Sp}_f)$ such that $H(G(A))$ is the spatial noncommutative frame that ``best approximates $A$'', or more precisely such that $G$ is left adjoint to $H$ as functors
\begin{equation*}
\begin{tikzcd}
\sh(\mathsf{Sp}_f)^\op~ \ar[rr, bend right=16pt,"{H}"'] & & ~\mathbf{LNFrm} \ar[ll, bend right=16pt,"{G}"']
\end{tikzcd}.
\end{equation*}
So we want to show that there is a natural bijection between noncommutative frame morphisms $A \to H(Y,\EE)$ and morphisms $(Y,\EE) \to G(A)$ in $\sh(\mathsf{Sp}_f)$.

We write $G(A) = (Y_A,\EE_A)$ where $Y_A$ is a topological space and $\EE_A$ is a sheaf on $Y_{A,f}$. We then define:
\begin{equation*}
Y_A = \pt(A/\mathcal{D})
\end{equation*}
to be the space of points of $A/\mathcal{D}$. We write $Y_{A,f}$ for $Y_A$ with its front topology. We can now use the equivalence relations $\sim_p$ from Definition \ref{def:equivalence-rel} to construct a space $E_A$ and a local homeomorphism $\pi_A : E_A \to Y_{A,f}$. We define
\begin{equation*}
E_p = \{ [a]_{\sim_p} : a\in A,~ p(a) = 1 \}
\end{equation*}
for each $p \in Y_A$, and
\begin{equation*}
E_A = \bigsqcup_{p \in Y_A} E_p = \{ (p,[a]_{\sim_p}) : p \in Y_A,~ a \in A,~ p(a) = 1  \}.
\end{equation*}
We can then define $\pi_A : E_A \longrightarrow Y_{A,f}$ as $\pi_A(p,[a]_{\sim_p}) = p$. The topology on $E_A$ is defined as follows. For $a \in A$, its $\mathcal{D}$-class $[a]$ corresponds to the open set
\begin{equation*}
U_a = \{ p \in Y_A : p(a) = 1 \} \subseteq Y_A.
\end{equation*}
Note that the set $U_a$ is in particular open in $Y_{A,f}$. We write $U_{a,f}$ for $U_a$ with the front topology. Each $a \in A$ defines a function
\begin{equation*}
s_a : U_a \longrightarrow E_A,\quad s_a(p) = (p,[a]_{\sim_p})
\end{equation*}
satisfying $\pi_A(s_a(p)) = p$. Then the subsets of the form
\begin{equation*}
s_a(Z) = \{ (p,[a]_{\sim_p}) : p \in Z \} \subseteq E_A
\end{equation*}
for $a \in A$ and $Z \subseteq U_a$ locally closed, generate a topology on $E_A$.

\begin{theorem}
With the notations above, $s_a : U_{a,f} \longrightarrow E_A$ is continuous for all $a \in A$, and the map $\pi_A : E_A \longrightarrow Y_{A,f}$ is a local homeomorphism.
\end{theorem}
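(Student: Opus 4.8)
The plan is to establish continuity of the sections $s_a$ first, and then to read off the local homeomorphism statement from them by a formal argument. The technical heart is a single observation about the \emph{agreement locus}
\[
W_{a,b} = \{\, p \in U_a \cap U_b : a \sim_p b \,\} \subseteq Y_A,
\]
namely that $W_{a,b}$ is open in the \emph{front} topology $Y_{A,f}$. I expect this to be the main obstacle, not because the argument is long, but because one is tempted to prove the stronger (and false) statement that $W_{a,b}$ is open in the original topology of $Y_A$. In the motivating case $A = H(Y,\EE)$, two sections with equal germ at $p$ agree only on \emph{some} front-open neighbourhood of $p$, which is typically not a neighbourhood in the original topology; so $W_{a,b}$ is genuinely front-open but need not be open. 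This is exactly what forces the front topology to appear on the domain of $s_a$.

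To see that $W_{a,b}$ is front-open, I would read the relation $\sim_p$ directly off Definition \ref{def:equivalence-rel}. Fix a pair $(c,d) \in A \times A$ satisfying the equation $(a \meet d) \join c = (b \meet d) \join c$ in $A$. Then every point $p$ with $p(c)=0$ and $p(d)=1$ witnesses $a \sim_p b$, using these same $c,d$ in the definition. The set of such points is
\[
\{\, p \in Y_A : p(d)=1,\ p(c)=0 \,\} = U_d \cap (Y_A \setminus U_c),
\]
an intersection of an open and a closed set, hence locally closed and therefore front-open. Conversely, if $p \in W_{a,b}$ then by definition some valid pair $(c,d)$ exists with $p \in U_d \setminus U_c$. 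Thus $W_{a,b}$ is the union, over all pairs $(c,d)$ for which the displayed equation holds, of the front-open sets $(U_d \setminus U_c) \cap U_a \cap U_b$, and so is front-open.

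Granting this, continuity of $s_a : U_{a,f} \to E_A$ is a direct computation on the subbasis of $E_A$. For $b \in A$ and $Z \subseteq U_b$ locally closed, one checks that
\[
s_a^{-1}\big(s_b(Z)\big) = (Z \cap U_a) \cap W_{a,b},
\]
since $s_a(p) = (p,[a]_{\sim_p})$ lies in $s_b(Z)$ precisely when $p \in Z \cap U_a$ and $[a]_{\sim_p} = [b]_{\sim_p}$. Here $Z \cap U_a$ is locally closed in $Y_A$ (as $Z$ is locally closed and $U_a$ is open) and hence front-open, and $W_{a,b}$ is front-open by the previous paragraph, so the preimage is front-open in $U_{a,f}$. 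As the sets $s_b(Z)$ form a subbasis of $E_A$, this gives continuity of $s_a$.

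For the local homeomorphism statement I would proceed as follows. First, $\pi_A$ is continuous: the locally closed subsets $Z \subseteq Y_A$ form a basis of $Y_{A,f}$, and $\pi_A^{-1}(Z) = \bigcup_{a \in A} s_a(Z \cap U_a)$ is a union of subbasic opens of $E_A$. Next, each $s_a(U_a)$ is subbasic open (take $Z = U_a$, which is locally closed), and these sets cover $E_A$ since every element of $E_A$ has the form $(p,[a]_{\sim_p}) = s_a(p)$. Finally, on $s_a(U_a)$ the map $\pi_A$ is a bijection onto the front-open set $U_{a,f}$ with two-sided inverse $s_a$; it is continuous as a restriction of $\pi_A$, and its inverse $s_a$ is continuous by the previous paragraph, so $\pi_A$ restricts to a homeomorphism $s_a(U_a) \xrightarrow{\sim} U_{a,f}$. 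Since the $s_a(U_a)$ form an open cover of $E_A$ on which $\pi_A$ restricts to homeomorphisms onto open subsets of $Y_{A,f}$, the map $\pi_A$ is a local homeomorphism.
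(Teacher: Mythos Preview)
Your proof is correct and follows essentially the same route as the paper: both arguments identify $s_a^{-1}(s_b(Z))$ with the set of points in $Z \cap U_a$ where $a \sim_p b$, and both use the witnesses $(c,d)$ from Definition~\ref{def:equivalence-rel} to exhibit a front-open neighbourhood $U_d \setminus U_c$ of each such point on which the same witnesses still work. Your packaging via the agreement locus $W_{a,b}$ is a clean abstraction of exactly this step. On the local homeomorphism part you are slightly more thorough than the paper, which simply notes that $\pi_A|_{s_a(U_a)}$ is a bijection with continuous inverse $s_a$; your explicit verification that $\pi_A$ is continuous and that the $s_a(U_a)$ form an open cover of $E_A$ fills in details the paper leaves implicit.
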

\begin{proof}
Take $W=s_b(Z) \subseteq E_A$ for $b \in A$ and $Z \subseteq U_b$ locally closed. To show that $s_a : U_{a,f} \to E_A$ is continuous, it is enough to show that $s_a^{-1}(W)$ is open in $Y_f$. Note that 
\begin{equation*}
s_a^{-1}(W)= \{p\in Z \cap U_a \,|\, a \sim_p b \}.
\end{equation*}
Take any $p\in s_a^{-1}(W)$. There exist $c,d\in A$ such that $p(c)=0$, $p(d)=1$ and $(a \meet d)\join c = (b\meet d)\join c$. Let $U_p=Z \cap U_a \cap U_d - U_c$, and note that $p\in U_p$ and that $U_p \subseteq Y_f$ is open. We claim that $U_p\subseteq s_a^{-1}(W)$. To see this, take any $p'\in U_p$. Then $p'(a)=p'(b)=p'(d)=1$ and $p'(c)=0$, so that $a\sim_{p'} b$ follows. Thus $p'\in s_a^{-1}(W)$, which proves $U_p\subseteq s_a^{-1}(W)$. 

To prove that $\pi_A$ is a local homeomorphism, note that 
\begin{equation*}
\pi_A|_{s_a(U_a)}: s_a(U_a) \to U_a
\end{equation*}
is a bijection with a continuous inverse $s_a$.
\end{proof}

\begin{remark}
If $Y_A$ is Hausdorff, then $Y_{A,f}$ is discrete. Since 
\begin{equation*}
\pi_A : E_A \longrightarrow Y_{A,f} 
\end{equation*}
is a local homeomorphism, this implies that $E_A$ is discrete as well. 
\end{remark}

\subsection{Adjunction between $G$ and $H$} \label{ssec:adjunction}

Let $U \subseteq Y_{A,f}$ be an open subset and take a continuous map $s : U \longrightarrow E_A$ such that $\pi(s(u)) = u$ for all $u \in U$. Fix a point $p \in Y_{A,f}$. Take an $a \in A$ such that $s(p) = (p,[a]_{\sim_p})$. Then
\begin{equation*}
V = s^{-1}(s_a(U_a)) = \{ p \in U : s(p) = (p,[a]_{\sim p}) \}
\end{equation*}
is an open subset of $Y_{A,f}$, containing $p$, such that
\begin{equation*}
s|_V = s_a|_V.
\end{equation*}
So, locally, $s$ is of the form $s_a$ for some $a \in A$.

The sheaf $\EE_A$ on $Y_{A,f}$ associated to $\pi_A : E_A \to Y_{A,f}$ is given by
\begin{equation*}
\EE_A(U) = \{ s : U \to E_A \text{ continuous with }\pi_A(s(p)) = p \text{ for all } p \in U \}
\end{equation*}
for $U \subseteq Y_{A,f}$ open. If $A$ and $A'$ are two left-handed noncommutative frames, and $\phi : A \to A'$ is a morphism, then there is an induced continuous map
\begin{equation*}
f : Y_{A'} \longrightarrow Y_A,\quad f(p) = p \circ \phi
\end{equation*}
and a morphism of sheaves
\begin{equation*}
\lambda : \EE \longrightarrow f_*\EE',\quad \lambda(s_a) = s_{\phi(a)}.
\end{equation*}
This uniquely determines $\lambda$ since each $s \in \EE_A(U)$ is locally of the form $s_a$ for $a \in A$.

\begin{theorem}
In the diagram
\begin{equation*}
\begin{tikzcd}
\sh(\mathsf{Sp}_f)^\op~ \ar[rr, bend right=16pt,"{H}"'] & & ~\mathbf{LNFrm} \ar[ll, bend right=16pt,"{G}"']
\end{tikzcd},
\end{equation*}
the functor $G$ is left adjoint to $H$.
\end{theorem}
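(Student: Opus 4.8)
The plan is to produce, for every left-handed noncommutative frame $A$ and every object $(Y,\EE)$ of $\sh(\mathsf{Sp}_f)$, a bijection
\[
\theta_{A,(Y,\EE)} : \mathbf{LNFrm}\big(A, H(Y,\EE)\big) \longrightarrow \sh(\mathsf{Sp}_f)\big((Y,\EE), G(A)\big),
\]
natural in both variables; since a map $G(A)\to(Y,\EE)$ in $\sh(\mathsf{Sp}_f)^\op$ is a map $(Y,\EE)\to G(A)$ in $\sh(\mathsf{Sp}_f)$, this is precisely the asserted adjunction $G \dashv H$. In the forward direction, given $\phi : A \to H(Y,\EE)$ I write $\phi(a) = (V_a, t_a)$ with $V_a\subseteq Y$ open and $t_a\in\EE(V_a)$, and define $f : Y \to Y_A = \pt(A/\DD)$ by sending $q\in Y$ to the frame morphism $A/\DD \xrightarrow{\phi_c} \OO(Y) \xrightarrow{q} \mathbf{2}$, where $\phi_c$ is the induced morphism of commutative shadows; concretely $f(q)(a)=1 \Leftrightarrow q\in V_a$, so $f^{-1}(U_a)=V_a$ and $f$ is continuous. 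I then set $\lambda(s_a)=t_a \in \EE(V_a)=(f_*\EE)(U_a)$ on the generating sections and extend by the fact (observed above) that every section of $\EE_A$ is locally of the form $s_a$. Conversely, given $(f,\lambda):(Y,\EE)\to G(A)$ I define $\phi(a)=\big(f^{-1}(U_a),\,\lambda(s_a)\big)$.

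The first steps are the continuity of $f$ (immediate from $\phi_c$ being a frame morphism, since the $U_a$ form a basis of $Y_A$) and the bookkeeping identity $\hat q\circ\phi = \widehat{f(q)}$ relating the induced lattice homomorphisms $\hat q : H(Y,\EE)\to\mathbf{2}$, $\hat q(W,s)=1\Leftrightarrow q\in W$, and $\widehat{f(q)} : A \to \mathbf{2}$. The technical heart is showing $\lambda$ is a well-defined sheaf morphism. Since $s_a$ and $s_b$ agree on an open $W\subseteq Y_A$ iff $a\sim_p b$ for all $p\in W$, well-definedness of the gluing reduces to the implication
\[
a \sim_{f(q)} b \;\Longrightarrow\; \mathrm{germ}_q(t_a)=\mathrm{germ}_q(t_b) \qquad (q\in f^{-1}(W)).
\]
I would prove this by applying $\phi$ to the witnesses $c,d$ of $a\sim_{f(q)}b$: using that $\phi$ preserves $\meet$ and $\join$ and that $\hat q\circ\phi=\widehat{f(q)}$, the images $\phi(c),\phi(d)$ witness $\phi(a)\sim_{\hat q}\phi(b)$ in $H(Y,\EE)$, and then Lemma \ref{lmm:stalks-vs-equivalence-rel} translates $\sim_{\hat q}$ into equality of germs. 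The sheaf condition for $\EE$ over $Y_f$ then glues the locally defined sections into $\lambda(s)\in\EE(f^{-1}(U))$.

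For the reverse direction I must check that $\phi(a)=(f^{-1}(U_a),\lambda(s_a))$ is a morphism of noncommutative frames. Here $\phi(0)=0$ and the top-$\DD$-class condition are immediate since $f^{-1}(Y_A)=Y$; preservation of $\meet$ follows from $U_{a\meet b}=U_a\cap U_b$, the identity $s_{a\meet b}=s_a|_{U_a\cap U_b}$ (valid because $a\meet b\sim_p a$ whenever $p(a)=p(b)=1$, witnessed by $c=0$, $d=b$), and the compatibility of $\lambda$ with restriction. Preservation of $\join$ and of arbitrary commuting joins is the most delicate clause: it rests on the observation that the overwrite $s|_{U-V}\cup t$ in $H(Y,\EE)$ is exactly the gluing over the locally closed set $U-V$ made legitimate by the front topology, which I would match against the corresponding descriptions of $s_{a\join b}$ and $s_{\bigvee_i a_i}$ in $\EE_A$.

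Finally, the two assignments are mutually inverse by a direct computation: from $\phi$ one recovers $f^{-1}(U_a)=V_a$ and $\lambda(s_a)=t_a$, and from $(f,\lambda)$ one recovers $f'(q)(a)=[q\in f^{-1}(U_a)]=f(q)(a)$ and $\lambda'(s_a)=\lambda(s_a)$. Naturality in $A$ and in $(Y,\EE)$ is then a diagram chase using the functoriality of $G$ and $H$ recorded earlier in this section. I expect the main obstacle to be the second paragraph together with the join-preservation clause of the third: both ultimately depend on Lemma \ref{lmm:stalks-vs-equivalence-rel}, which is the precise dictionary translating the equivalence relations $\sim_p$ built into $E_A$ into equalities of germs in $\EE$, and on faithfully tracking the overwrite operation through gluing over the front topology.
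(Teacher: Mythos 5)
Your proposal is correct and follows essentially the same route as the paper's proof: both establish the hom-set bijection by sending $\phi$ to the pair $(f,\lambda)$ with $f$ induced by $\phi/\mathcal{D}$ and $\lambda(s_a)=\phi(a)$, and recovering $\phi$ from $(f,\lambda)$ via $\phi(a)=(f^{-1}(U_a),\lambda(s_a))$. The paper states the well-definedness of $\lambda$ and the morphism properties of the reconstructed $\phi$ without proof, whereas you supply the verification via Lemma \ref{lmm:stalks-vs-equivalence-rel}; this is a correct filling-in of details rather than a different argument.
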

\begin{proof}
For $A$ a left-handed noncommutative frame, we write $G(A) = (Y_A,\EE_A)$ as above. Take $(Y,\EE)$ in $\sh(\mathsf{Sp}_f)$ and a morphism of noncommutative frames
\begin{equation*}
\phi : A \longrightarrow H(Y,\EE).
\end{equation*}
This induces a frame morphism $\phi/\mathcal{D} : A/\mathcal{D} \longrightarrow \OO(Y)$, or equivalently, a morphism of locales $f: Y \longrightarrow Y_A$. Moreover, $\phi$ induces a morphism of sheaves
\begin{equation*}
\lambda : \EE_A \longrightarrow f_* \EE,
\end{equation*}
uniquely defined by $\lambda(s_a) = \phi(a)$ for all $a \in A$ (the $\mathcal{D}$-class of $\phi(a)$ is $f^{-1}(U_a)$, so $\phi(a)$ can be seen as an element of $(f_*\EE)(U_a)$). Together, $f$ and $\lambda$ determine a morphism
\begin{equation*}
(f,\lambda) : (Y,\EE) \longrightarrow G(A)
\end{equation*}
in $\sh(\mathsf{Sp}_f)$. Conversely, if such a morphism $(f,\lambda)$ is given, then we can reconstruct $\phi$ using $\phi(a) = \lambda(s_a)$. So there is a bijective correspondence between noncommutative frame morphisms $A \longrightarrow H(Y,\EE)$ and morphisms $(Y,\EE) \longrightarrow G(A)$ in $\sh(\mathsf{Sp}_f)$. It is straightforward to check that this bijection is natural in $A$ and $(Y,\EE)$. So $G$ is left adjoint to $H$.
\end{proof}

\subsection{Proof that the two examples from \ref{ssec:not-spatial} are not spatial}
\label{ssec:proof-not-spatial}

Consider the noncommutative frames $A'$ and $A''$ from Subsection \ref{ssec:not-spatial}. To show that $A'$ and $A''$ are not spatial, it is enough to show that the natural morphisms $A' \to H(G(A'))$ resp.\ $A'' \to H(G(A''))$ are not bijective, by the general theory of adjunctions.

We first compute $G(A') = (Y',\EE')$. Here $Y' = \mathbb{R}$ (with the Euclidean topology), so $Y'_f$ is given by $\mathbb{R}$ with the discrete topology. Take two elements $(U,f)$ and $(V,g)$ in $A'$, and a point $p \in U \cap V$. We claim that $(U,f)\sim_p (V,g)$. Take an open set (for the Euclidean topology) $W \subseteq U \cap V$ with $W \ni p$. Then $W' = W - \{p\}$ is open as well. Take arbitrary elements $(W,h)$ and $(W',h')$ in $A'$. Then:
\begin{equation*}
((U,f) \meet (W,h)) \join (W',h') = ((V,g) \meet (W,h)) \join (W',h')
\end{equation*}
because the corresponding partial functions disagree in at most one point. So $(U,f) \sim_p (V,g)$. Because $(U,f)$ and $(V,g)$ were arbitrary, this means that $\EE_p$ is a singleton for all $p \in Y'$. It follows that $\EE'(U)$ is a singleton for any subset $U \subseteq \mathbb{R}$. So $H(G(A')) = \PP(\mathbb{R})$, which shows that the morphism $A' \to H(G(A'))$ is surjective but not injective. In particular, $A'$ is not spatial.

We now compute $G(A'') = (Y'',\EE'')$. Again $Y'' = \mathbb{R}$ with the Euclidean topology, and $Y''_f$ has the discrete topology. Take two elements $(U,f)$ and $(V,g)$ in $A''$ and an element $p \in U \cap V$. Then it is easy to show that $(U,f) \sim_p (V,g)$ if and only if $f(p) = g(p)$. This shows that $\EE''_p = \{0,1\}$ for all $p \in Y''$. It follows that $H(G(A'')) = \PP(\mathbb{R},\{0,1\})$. The natural morphism
\begin{equation*}
A'' \longrightarrow H(G(A'')) = \PP(\mathbb{R},\{0,1\})
\end{equation*}
sending $(U,f) \in A''$ to $(U,f) \in \PP(\mathbb{R},\{0,1\})$ is injective but not surjective. This shows that $A''$ is not spatial either.

\subsection{Duality for spatial noncommutative frames}

Consider the adjunction 
\begin{equation*}
\begin{tikzcd}
\sh(\mathsf{Sp}_f)^\op~ \ar[rr, bend right=16pt,"{H}"'] & & ~\mathbf{LNFrm} \ar[ll, bend right=16pt,"{G}"']
\end{tikzcd}.
\end{equation*}
Let $\mathbf{D} \subseteq \mathbf{LNFrm}$ be the full subcategory of spatial noncommutative frames, and conversely let $\mathbf{C} \subseteq \sh(\mathsf{Sp}_f)$ be the full subcategory of pairs $(Y,\EE)$ that can be written as $(Y,\EE) \cong G(A)$ for some $A$ in $\mathbf{LNFrm}$. Recall that since $H$ and $G$ are adjoint, they restrict to an equivalence of categories
\begin{equation*}
\begin{tikzcd}
\mathbf{C}^\op~ \ar[rr, bend right=16pt,"{H}"'] & & ~\mathbf{D} \ar[ll, bend right=16pt,"{G}"'].
\end{tikzcd}
\end{equation*}
A pair $(Y,\EE)$ is in $\mathbf{C}$ if and only if the natural morphism 
\begin{equation} \label{eq:sober}
\begin{split}
&(f,\lambda) : (Y,\EE) \longrightarrow G(H(Y,\EE)),\quad\text{given by} \\
&f : Y \longrightarrow \pt(\OO(Y)),\quad p \mapsto \hat{p}\quad\text{with}\quad \hat{p}(U) = \begin{cases}
1\qquad\text{if }p \in U, \\
0\qquad \text{if }p \notin U
\end{cases} \\
&\lambda : \EE_{H(Y,\EE)} \longrightarrow f_*\EE,\quad s_a \mapsto a
\end{split}
\end{equation}
is an isomorphism. In this case, $(Y,\EE)$ will be called a \emph{sober} sheaf for the front topology. We can then state the following duality theorem.

\begin{theorem}[Duality for spatial noncommutative frames] \label{thm:duality-spatial}
The functors $G$ and $H$ induce a dual equivalence between the category $\mathbf{C}$ of sober sheaves for the front topology, and the category $\mathbf{D}$ of spatial left-handed noncommutative frames.
\end{theorem}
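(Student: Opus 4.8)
The plan is to deduce the theorem from the general fixed-point theorem for adjunctions, reducing all of the content to a single lemma about the unit. Write $\eta_A : A \to HG(A)$ for the unit of the adjunction $G \dashv H$ and $\varepsilon_{(Y,\EE)}$ for the counit, the latter being the morphism \eqref{eq:sober}. For \emph{any} adjunction, $G$ and $H$ restrict to mutually inverse equivalences between the full subcategory $\mathrm{Fix}(\eta) \subseteq \mathbf{LNFrm}$ of objects on which $\eta$ is an isomorphism and the full subcategory $\mathrm{Fix}(\varepsilon) \subseteq \sh(\mathsf{Sp}_f)^\op$ of objects on which $\varepsilon$ is an isomorphism; this follows from the triangle identities, since $\eta_A$ being an isomorphism forces $\varepsilon_{G(A)} = (G\eta_A)^{-1}$ to be one as well, and dually. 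Because $\mathrm{Fix}(\varepsilon)$ sits inside the opposite category, this is a \emph{dual} equivalence between the corresponding full subcategories of $\mathbf{LNFrm}$ and $\sh(\mathsf{Sp}_f)$. It therefore remains only to identify $\mathbf{D}$ with $\mathrm{Fix}(\eta)$ and $\mathbf{C}$ with $\mathrm{Fix}(\varepsilon)$.

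The identification $\mathbf{C} = \mathrm{Fix}(\varepsilon)$ holds by the very definition of a sober sheaf for the front topology: a pair $(Y,\EE)$ is declared sober precisely when \eqref{eq:sober} is an isomorphism. For the identification $\mathbf{D} = \mathrm{Fix}(\eta)$, one inclusion is immediate, since $\eta_A$ being an isomorphism gives $A \cong HG(A)$, which is spatial by construction. The substance is the reverse inclusion, which I would isolate as a lemma: if $A \cong H(Y,\EE)$ is spatial, then $\eta_A$ is an isomorphism.

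To prove this lemma I would first invoke the remark following the definition of spatial to replace $(Y,\EE)$ by its sobrification, so that we may assume $Y$ is sober, and then compute $G(A) = (Y_A,\EE_A)$ directly. On commutative shadows one has $A/\DD \cong \OO(Y)$, hence $Y_A = \pt(A/\DD) = \pt(\OO(Y))$ is the sobrification of $Y$, which is $Y$ itself. For the sheaf part, Lemma~\ref{lmm:stalks-vs-equivalence-rel} identifies the stalk $E_p = \{\, [a]_{\sim_p} : p(a)=1 \,\}$ with the stalk $\EE_p$ via $[(\dom s,s)]_{\sim_p} \mapsto \mathrm{germ}_p(s)$, so that the \'etale space $E_A$ is identified with the \'etale space of $\EE$ and, consequently, $\EE_A$ with $\EE$. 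Assembling these identifications yields an isomorphism $HG(A) \cong A$, and a direct check on the sections $s_a$ shows that it is exactly $\eta_A$, so naturality comes for free.

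The main obstacle is precisely this lemma, and within it the passage from a stalkwise bijection to an honest isomorphism of sheaves on $Y_f$. I expect the delicate points to be: matching the front topology on $Y_A = \pt(\OO(Y))$ with that on $Y$ (which is where sobriety of $Y$ enters); checking that the generating opens $s_a(Z)$ of the topology on $E_A$ correspond to the \'etale-space opens of $\EE$, so that $\pi_A : E_A \to Y_{A,f}$ and the projection of $\EE$ agree as local homeomorphisms; and confirming that the comparison map is the unit rather than merely some isomorphism. Once the lemma is in place we have $\mathbf{D} = \mathrm{Fix}(\eta)$ and $\mathbf{C} = \mathrm{Fix}(\varepsilon)$, and the fixed-point theorem delivers the asserted dual equivalence between $\mathbf{C}$ and $\mathbf{D}$.
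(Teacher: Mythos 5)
Your proposal is correct and follows essentially the same route as the paper: the paper also reduces the theorem to the general fact that an adjunction restricts to an equivalence on the fixed subcategories of unit and counit, with the real content being that spatial frames are exactly the unit-fixed points, established (after passing to the sobrification) by the stalk computation via Lemma~\ref{lmm:stalks-vs-equivalence-rel} — in the paper this appears as the subsequent proposition that $(Y,\EE)$ is sober iff $Y$ is sober. Your phrasing in terms of $\mathrm{Fix}(\eta)$ and $\mathrm{Fix}(\varepsilon)$ is in fact slightly more careful than the paper's appeal to essential images, which strictly speaking presupposes the very lemma you isolate.
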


We have the following characterization of sober sheaves:

\begin{proposition}
Take $(Y,\EE)$ in $\sh(\mathsf{Sp}_f)$. Then $(Y,\EE)$ is sober if and only if $Y$ is sober.
\end{proposition}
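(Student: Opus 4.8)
The plan is to read off the meaning of \emph{sober} from the description in (\ref{eq:sober}): by definition $(Y,\EE)$ is sober precisely when the natural morphism $(f,\lambda)\colon (Y,\EE)\to G(H(Y,\EE))$ is an isomorphism in $\sh(\mathsf{Sp}_f)$. Writing $A=H(Y,\EE)$, recall that $A/\DD\cong\OO(Y)$, so that $Y_A=\pt(A/\DD)=\pt(\OO(Y))$ is the sobrification $\hat Y$ of $Y$ and $f\colon Y\to\hat Y$ is exactly the unit map $p\mapsto\hat p$ of the adjunction $i\dashv\pt$. A morphism $(f,\lambda)$ in $\sh(\mathsf{Sp}_f)$ is an isomorphism if and only if the underlying continuous map $f$ is a homeomorphism and the sheaf component $\lambda$ is an isomorphism of sheaves; indeed, composition in $\sh(\mathsf{Sp}_f)$ composes the underlying maps, so the underlying map of an isomorphism is itself invertible. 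This already gives the forward direction: if $(Y,\EE)$ is sober then $f\colon Y\to\hat Y$ is a homeomorphism, which is precisely the statement that $Y$ is sober.

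For the converse, assume $Y$ is sober, so that $f\colon Y\to\hat Y$ is a homeomorphism for the original topologies; since a homeomorphism preserves open and closed sets, and hence locally closed sets, it also induces a homeomorphism $Y_f\to(\hat Y)_f=Y_{A,f}$ of front spaces. It then remains to prove that $\lambda\colon\EE_A\to f_*\EE$ is an isomorphism of sheaves on $Y_{A,f}$, and since $Y_{A,f}$ is a genuine topological space it suffices to check that $\lambda$ is bijective on every stalk. For $p=\hat q\in Y_A$, the stalk of $\EE_A$ at $p$ is the fibre $E_p=\{[a]_{\sim_p}:a\in A,~p(a)=1\}$ of the local homeomorphism $\pi_A$, while the stalk of $f_*\EE$ at $p$ is (for the homeomorphism $f$) the stalk $\EE_q$ of $\EE$ for the front topology, and under these identifications $\lambda$ sends $[(U,s)]_{\sim_p}$ to $\mathrm{germ}_q(s)$.

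That this map is well defined and injective is exactly the content of Lemma~\ref{lmm:stalks-vs-equivalence-rel}. Surjectivity is the step that requires the hypothesis $\EE(Y_f)\neq\varnothing$: given any element of $\EE_q$, represent it by a section $t\in\EE(V)$ over a front-open $V\ni q$; shrinking $V$ using that locally closed sets form a basis of $Y_f$, we may assume $V=W\cap C$ with $W$ open and $C$ closed in $Y$ and $q\in V$. Now $W=(W\cap C)\sqcup(W\setminus C)$ is a partition of the open set $W$ into two disjoint front-opens, so the matching condition over the cover $\{W\cap C,\,W\setminus C\}$ is vacuous, and $t|_{W\cap C}$ glues with the restriction to $W\setminus C$ of a chosen global section in $\EE(Y_f)$ to a section $s\in\EE(W)$ with $\mathrm{germ}_q(s)=\mathrm{germ}_q(t)$. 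Since $W$ is open in $Y$, the pair $(W,s)$ lies in $A=H(Y,\EE)$ and maps to the given stalk element, proving surjectivity. Hence $\lambda$ is a stalkwise bijection, so an isomorphism, and therefore $(f,\lambda)$ is an isomorphism, i.e.\ $(Y,\EE)$ is sober.

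The routine parts are the forward direction and the identification of the two stalks; the one genuinely delicate point is the surjectivity on stalks, where one must produce an honest section over an \emph{open} set of $Y$ with a prescribed germ for the finer front topology. This is where $\EE(Y_f)\neq\varnothing$ enters in an essential way, via the clopen partition $W=(W\cap C)\sqcup(W\setminus C)$ that lets us extend a germ defined only on the locally closed set $W\cap C$ across all of $W$.
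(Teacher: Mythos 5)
Your proof is correct and follows essentially the same route as the paper: reduce to checking that $\lambda$ is a stalkwise isomorphism and invoke Lemma~\ref{lmm:stalks-vs-equivalence-rel} for well-definedness and injectivity. The one place you go beyond the paper is the explicit surjectivity argument on stalks (extending a germ given over a locally closed set $W\cap C$ to a section over the honest open set $W$ by gluing with a restricted global section); the paper leaves this step implicit, and your observation that the hypothesis $\EE(Y_f)\neq\varnothing$ is exactly what makes this gluing possible is a genuine and worthwhile addition.
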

\begin{proof}
If $(Y,\EE)$ is sober, then in particular $f: Y \longrightarrow \pt(\OO(Y))$ is an isomorphism, so $Y$ is sober. 

Conversely, if $Y$ is sober, the component $f: Y \longrightarrow \pt(\OO(Y))$ from (\ref{eq:sober}) is an isomorphism. We would like to show that $\lambda$ is an isomorphism. It is enough to show that the induced morphism on stalks
\begin{equation*}
\lambda_p : (\EE_{H(Y,\EE)})_p \longrightarrow (f_*\EE)_p = \EE_p
\end{equation*}
is an isomorphism, for every $p \in Y$. We compute
\begin{equation*}
(\EE_{H(Y,\EE)})_p ~=~ \{ [a]_{\sim_p} : a \in H(Y,\EE),~p(a) = 1 \}
\end{equation*}
and $\lambda_p$ is given by $\lambda_p([a]_{\sim_p}) = \mathrm{germ}_p(a)$. The statement now follows from Lemma \ref{lmm:stalks-vs-equivalence-rel}.
\end{proof}

\section{Separation properties}
\label{sec:separation}

Recall that $P_{\{ 1_a, 1_b \}}$ denotes the primitive skew lattice with top elements $1_a$ and $1_b$. From the results of \cite{kcv-priestley} it easily follows that for every two elements $a,b \in A$ in the same $\mathcal{D}$-class, there is a morphism of skew lattices
\begin{equation*}
q : A \to P_{\{1_a,1_b\}}
\end{equation*}
such that $q(a)=1_a$ and $q(b)=1_b$. We say that $q$ \emph{separates $a$ and $b$}. However, this morphism $q$ is in general not a morphism of noncommutative frames. In fact:

\begin{proposition} \label{prop:separation}
Let $A$ be a noncommutative frame such that $A/\mathcal{D}$ is spatial. Then the following are equivalent:
\begin{enumerate}
\item for all $a,b \in A$ in the same $\mathcal{D}$-class such that $a \neq b$, there is a morphism of noncommutative frames
\begin{equation*}
q : A \to P_{\{1_a,1_b\}}
\end{equation*}
such that $q(a) = 1_a$ and $q(b) = 1_b$;
\item the natural map $\sigma : A \longrightarrow H(G(A)),~ a \mapsto (U_a,s_a)$ is injective;
\item there is an injective morphism of noncommutative frames $A \to A'$ for some spatial noncommutative frame $A'$;
\item there is an injective morphism of noncommutative frames $A \to \PP(R,S)$ for some sets $R$ and $S$.
\end{enumerate}
\end{proposition}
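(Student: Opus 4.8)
The plan is to prove the equivalences by the cycle $(1) \Rightarrow (2) \Rightarrow (3) \Rightarrow (4) \Rightarrow (1)$. Two links are essentially formal. The map $\sigma$ in $(2)$ is the unit of the adjunction between $G$ and $H$, hence a morphism of noncommutative frames, and $H(G(A))$ has the form $H(Y,\EE)$ and is therefore spatial; so if $\sigma$ is injective we may take $A' = H(G(A))$, giving $(2) \Rightarrow (3)$. For $(3) \Rightarrow (4)$, suppose $A \hookrightarrow A'$ with $A' = H(Y,\EE)$ spatial. As recalled in Subsection \ref{ssec:spatial-definition}, there is a canonical map $\Phi : H(Y,\EE) \to \PP(Y,E)$, where $E = \bigsqcup_{p \in Y}\EE_p$, sending $(U,s)$ to the partial function $p \mapsto \mathrm{germ}_p(s)$. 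I would check that $\Phi$ is an injective morphism of noncommutative frames: injectivity is the sheaf separation axiom (a section is determined by its germs), while preservation of $\meet$, $\join$, $0$, the top $\DD$-class and commuting joins is a direct unwinding of the definitions. Composing $A \hookrightarrow A' \xrightarrow{\Phi} \PP(Y,E)$ then yields $(4)$ with $R = Y$ and $S = E$.

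For $(4) \Rightarrow (1)$, let $\iota : A \hookrightarrow \PP(R,S)$ and take $a \neq b$ in a single $\DD$-class. Then $\iota(a)$ and $\iota(b)$ lie in one $\DD$-class of $\PP(R,S)$, so they share a domain $D$ and differ at some $r \in D$, giving $\iota(a)(r) \neq \iota(b)(r)$. The assignment $p(f) = 1 \Leftrightarrow r \in \dom(f)$ is a point $p : \PP(R,S) \to \mathbf{2}$ and a morphism of noncommutative frames, so the primitive quotient $\pi : \PP(R,S) \to \PP(R,S)/\!\!\sim_p$ is one too. Identifying $\PP(R,S)$ with $H(R,\underline{S})$ for $R$ discrete and invoking Lemma \ref{lmm:stalks-vs-equivalence-rel}, one finds $f \sim_p g \Leftrightarrow f(r) = g(r)$, whence $\PP(R,S)/\!\!\sim_p \,\cong\, P_S$ with $\pi(f)$ the top $f(r)$. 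I would then collapse $P_S$ onto $P_{\{1_a,1_b\}}$ by fixing $\iota(a)(r) \mapsto 1_a$ and $\iota(b)(r) \mapsto 1_b$ and sending every other top to $1_a$; since distinct tops never commute and within a $\DD$-class $\meet$ is left-zero while $\join$ is right-zero, a short case check shows this is a morphism of noncommutative frames. The composite separates $a$ and $b$, giving $(1)$.

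For $(1) \Rightarrow (2)$, I would show that $\sigma(a) = (U_a, s_a)$ determines $a$. If $[a] \neq [b]$, then since $A/\DD$ is spatial (this is where the standing hypothesis enters) the open sets satisfy $U_a \neq U_b$, so $\sigma(a) \neq \sigma(b)$. If $[a] = [b]$ but $a \neq b$, take the separating morphism $q : A \to P_{\{1_a,1_b\}}$ from $(1)$ and set $p = q/\DD \in Y_A$; then $p(a) = p(b) = 1$, so $p \in U_a = U_b$. By the universal property of primitive quotients from Subsection \ref{ssec:primitive-quotients}, $q$ factors through $\pi : A \to A/\!\!\sim_p$, so $a \sim_p b$ would force $q(a) = q(b)$, contradicting $q(a) = 1_a \neq 1_b = q(b)$. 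Hence $[a]_{\sim_p} \neq [b]_{\sim_p}$, so $s_a(p) \neq s_b(p)$ and $\sigma(a) \neq \sigma(b)$.

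I expect the main obstacle to be the translation, in both $(4) \Rightarrow (1)$ and $(1) \Rightarrow (2)$, between \emph{separation by a morphism to $P_{\{1_a,1_b\}}$} and the germ-level statement \emph{$a \not\sim_p b$ at a suitable point $p$}; the factorization property of $\pi$ together with Lemma \ref{lmm:stalks-vs-equivalence-rel} is exactly the bridge. The accompanying verifications that the collapse $P_S \to P_{\{1_a,1_b\}}$ and the \'etale embedding $\Phi$ are morphisms of noncommutative frames, and not merely of skew lattices, so that in particular they respect the maximal $\DD$-class and all commuting joins, are routine but require care.
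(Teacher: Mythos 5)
Your proof is correct and follows essentially the same route as the paper: $(2)\Rightarrow(3)$ is trivial, $(3)\Rightarrow(4)$ embeds $H(Y,\EE)$ into $\PP(Y,E)$ via germs, and $(4)\Rightarrow(1)$ restricts to a single point of the domain and collapses $P_S$ onto $P_{\{1_a,1_b\}}$ (your detour through the primitive quotient $\PP(R,S)/\!\!\sim_p$ is just the paper's map $\xi(f)=f|_{\{p\}}$ in different notation). The only cosmetic difference is in $(1)\Rightarrow(2)$, where the paper invokes the adjunction to factor $q$ through the unit $\sigma$, whereas you unwind the same fact by hand using the universal property of the primitive quotient to get $[a]_{\sim_p}\neq[b]_{\sim_p}$; both arguments are sound.
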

\begin{proof}
\underline{$(1)\Rightarrow(2)$}.~ Suppose that $\sigma(a) = \sigma(b)$. If $a$ and $b$ have different $\mathcal{D}$-class, we can use that $A/\mathcal{D}$ is spatial to construct a morphism $p : A \longrightarrow \mathbf{2}$ such that $p(a) \neq p(b)$. But $\mathbf{2}$ is spatial as noncommutative frame, so there is a factorization $p = \xi \circ \sigma$ for some $\xi : H(G(A)) \to \mathbf{2}$. This leads to a contradiction. If $a$ and $b$ have the same $\mathcal{D}$-class, then since $(1)$ holds, we can construct a morphism of noncommutative frames $q : A \to P_{\{1_a,1_b\}}$ such that $q(a) \neq q(b)$. Again there is a factorization $q = \xi \circ \sigma$, a contradiction.

\underline{$(2)\Rightarrow(3)$}.~ This is trivial, because $H(G(A))$ is spatial.

\underline{$(3)\Rightarrow(4)$}.~ Suppose $A' = H(Y,\EE)$, and let $\pi : E \to Y_f$ be the \'etale space corresponding to $\EE$. Then the inclusion
\begin{equation*}
A' \subseteq \PP(Y,E)
\end{equation*}
is a morphism of noncommutative frames. But then we can also embed $A$ in $\PP(Y,E)$.

\underline{$(4)\Rightarrow(1)$}.~ Suppose that there is an injection $i : A \to \PP(R,S)$. Take $a,b \in A$ in the same $\mathcal{D}$-class, with $a \neq b$. Then $i(a)$ and $i(b)$ are function $U \to S$ for some subset $U \subseteq R$. Moreover, we can find $p \in U$ such that $i(a)$ and $i(b)$ have different values in $p$. We now consider the map
\begin{equation*}
\xi : \PP(R,S) \to \PP(\{p\},S),\quad \xi(f) = f|_{\{p\}}.
\end{equation*}
Then composition gives a morphism $q': A \to P_S$ from $A$ to the primitive skew lattice with $S$ as set of top elements, such that $q'(a)$ and $q'(b)$ are different top elements. Now take a quotient $\PP_S \to \PP_{\{1_a,1_b\}}$ sending $q'(a)$ to $1_a$ and $q'(b)$ to $1_b$.
\end{proof}

\begin{example}
Consider the noncommutative frame $A'$ from Subsection \ref{ssec:not-spatial}. We showed in Subsection \ref{ssec:proof-not-spatial} that
\begin{equation*}
H(G(A')) = \PP(\mathbb{R}).
\end{equation*}
The natural map $A' \longrightarrow H(G(A'))$ sends $(U,f)$ to $U$, and as a result this map is surjective but not injective. So none of the equivalent statements from \mbox{Proposition \ref{prop:separation}} hold in this case, for example there is no embedding of noncommutative frames of $A'\subseteq \PP(R,S)$ for sets $R$ and $S$.

For the noncommutative frame $A''$ from Subsection \ref{ssec:not-spatial} there is an embedding $A'' \subseteq \PP(R,S)$. So here the equivalent conditions from \mbox{Proposition \ref{prop:separation}} are satisfied.
\end{example}

\section{Sheaves on the dissolution locale}
\label{sec:dissolution}

Theorem \ref{thm:duality-spatial} shows that we can identify a spatial noncommutative frame with its corresponding pair $(Y,\EE)$, where $Y$ is a sober space and $\EE$ is a sheaf on $Y_f$. In the remaining part of the paper, we discuss a duality result for general left-handed noncommutative frames. More precisely, we want to show that noncommutative frames with commutative shadow $\OO(Y)$, for $Y$ a locale, correspond to sheaves on the dissolution locale $Y_d$. In this section we recall the definition of dissolution locale and give a description of the sheaves on it. Note that in the literature, e.g.\ in \cite[C1.1]{johnstone-elephant-2} and \cite{frame-of-nuclei-spatial}, results on the dissolution locale are often phrased in terms of its associated frame, called the frame of nuclei or assembly.

\subsection{The dissolution locale}
\label{ssec:dissolution-pullback}

Let $Y$ be a locale and let $L = \OO(Y)$ be the associated frame. A \emph{nucleus} on $L$ is a function $\nu : L \to L$ satisfying
\begin{enumerate}
\item[(N1)] $a \leq \nu(a)$;
\item[(N2)] $\nu(a \meet b) = \nu(a) \meet \nu(b)$;
\item[(N3)] $\nu(\nu(a)) = \nu(a)$
\end{enumerate}
for $a, b \in L$. Each nucleus defines a surjective morphism of frames 
\begin{equation*}
\nu : L \longrightarrow \nu(L),\quad a \mapsto \nu(a)
\end{equation*}
and up to isomorphism every surjective frame morphism is of this form. In the category of frames, the regular epimorphisms are precisely the surjective morphisms. In this way, we see that nuclei on $L$ correspond bijectively to \emph{sublocales} of Y, i.e.\ with isomorphism classes of regular monomorphisms $Y' \longrightarrow Y$. The nuclei on $L$ are partially ordered via
\begin{equation*}
\nu \leq \nu' ~\Leftrightarrow~ \nu(a) \leq \nu'(a),~\forall a \in L.
\end{equation*}
With this partial order, the set of nuclei $N(L)$ is a frame, see \cite[C1.1]{johnstone-elephant-2}, called the \emph{frame of nuclei} or the \emph{assembly}. The locale dual to $N(L)$ is denoted by $Y_d$ and is called the \emph{dissolution locale} of $Y$. There is an embedding of frames
\begin{equation*}
L \longrightarrow N(L),\quad a \mapsto \nu_a
\end{equation*}
with the nucleus $\nu_a$ defined by $\nu_a(b) = a \join b$.

\subsection{The dissolution locale as a pullback}

For every distributive lattice $L$ there is a boolean algebra $L_b \supseteq L$ with the property that any lattice homomorphism $L \longrightarrow B$ to a boolean algebra $B$ extends to a lattice homomorphism $L_b \longrightarrow B$ in a unique way. We call $L_b$ the \emph{boolean envelope} of $L$, following the terminology of Banaschewski in \cite{banaschewski}. The frame of nuclei $N(L)$ plays a similar role for frames: if $\phi: L \to B$ is a morphism of frames to a frame $B$, such that $\phi(a)$ is complemented for all $a \in L$, then there is a unique frame morphism $N(L) \longrightarrow B$ extending $\phi$, see \cite[C1.1]{johnstone-elephant-2}. With these universal properties in mind, it is easy to show that there is a pushout diagram
\begin{equation*}
\begin{tikzcd}
\mathrm{Idl}(L_b) \ar[r] & N(L) \\
\mathrm{Idl}(L) \ar[u] \ar[r] & L \ar[u]
\end{tikzcd}
\end{equation*}
where $L_b$ is the boolean envelope of $L$ (as a distributive lattice), and $\mathrm{Idl}(L)$ and $\mathrm{Idl}(L_b)$ are the ideal completions of $L$ resp.\ $L_b$. This pushout diagram already appeared in \cite{klinke-slides}.

Dually, with $Y$ the locale corresponding to $L$, we can write the dissolution locale $Y_d$ as a pullback. Let 
\begin{equation*}
X = \mathrm{Spec}(L) 
\end{equation*}
be the prime spectrum of $L$. The patch topology on $X$, as introduced by Hochster in \cite{hochster}, is the topology generated by the compact open subsets in $X$ and their complements. We will write $X_p$ for $X$ with the patch topology; this is a compact Hausdorff space such that the clopen subsets are a basis for the topology. Then $X$ and $X_p$ are the locales corresponding to $\mathrm{Idl}(L)$ resp.\ $\mathrm{Idl}(L_b)$. So the following is a pullback diagram in the category of locales:
\begin{equation} \label{eq:pullback-dissolution}
\begin{tikzcd}
Y_d \ar[r,"{j}"] \ar[d,"{\delta}"'] & X_\patch \ar[d,"{\pi}"] \\
Y \ar[r,"{i}"'] & X
\end{tikzcd}.
\end{equation}

\subsection{Sheaves on the dissolution locale }
\label{ssec:sheaves-on-dissolution}
 The pullback diagram (\ref{eq:pullback-dissolution}) gives rise to a commutative diagram
\begin{equation} \label{eq:pullback-pushforwards}
\begin{tikzcd}
\sh(Y_d) \ar[r,"{j_*}"] \ar[d,"{\delta_*}"'] & \sh(X_\patch) \ar[d,"{\pi_*}"] \\
\sh(Y) \ar[r,"{i_*}"'] & \sh(X)
\end{tikzcd}
\end{equation}
between the associated sheaf toposes. Note that $i_*$ and $j_*$ are the pushforward maps corresponding to the sublocales $Y \subseteq X$ resp.\ $Y_d \subseteq X_\patch$. So both $i_*$ and $j_*$ are fully faithful. This means that, up to equivalence, we can identify $\sh(Y)$ and $\sh(Y_d)$ with their essential images in $\sh(X)$ resp.\ $\sh(X_p)$. We then say that a sheaf on $X$ \emph{is a sheaf on $Y$} if it is in the essential image of $i_*$. Similarly, we say that a sheaf on $X_p$ \emph{is a sheaf on $Y_d$} if it is in the essential image of $j_*$.

We want to give a concrete description of the sheaves on $Y_d$ in terms of sheaves on $Y$, $X$ and $X_p$. A first step is the following:

\begin{lemma} \label{lmm:sheaves-on-dissolution}
Let $\GG$ be a sheaf on $X_p$. The following are equivalent:
\begin{enumerate}
\item $\GG$ is a sheaf on $Y_d$ (in the sense explained above);
\item for every compact open $U \subseteq X$ and compact opens $U_i \subseteq U$, $i \in I$, such that $i^{-1}(U) = \bigcup_{i \in I} i^{-1}(U_i)$, we have the following ``stable'' sheaf condition: for every clopen $Z \subseteq X_p$ and family of sections
\begin{equation*}
s_i \in \GG(U_i \cap Z),~ i \in I
\end{equation*}
agreeing on intersections, there is a unique section $s \in \GG(U \cap Z)$ such that $s|_{U_i \cap Z} = s_i$ for all $i \in I$. 
\end{enumerate}
\end{lemma}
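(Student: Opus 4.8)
The plan is to translate the assertion ``$\GG$ is a sheaf on $Y_d$'' into a sheaf condition on the canonical site of $X_p$, and then to recognize condition (2) as exactly that condition. Recall that a sublocale inclusion induces a subtopos, and that the essential image of $j_* : \sh(Y_d) \to \sh(X_p)$ consists precisely of those sheaves on $X_p$ that are sheaves for the Grothendieck topology $J$ on the frame $\OO(X_p) = \mathrm{Idl}(L_b)$ (with its canonical coverage) corresponding to the nucleus cutting out $Y_d \subseteq X_p$. Since $\GG$ is by hypothesis already a sheaf on $X_p$, it satisfies gluing for every canonical cover of $X_p$; as $J$ refines the canonical topology, the only extra content of the $J$-sheaf condition comes from those $J$-covers that are \emph{not} covers of $X_p$, and the whole problem reduces to identifying these extra covers explicitly.

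To do this I would use that $Y_d = \pi^{-1}(Y)$ is the pullback of the sublocale $i : Y \hookrightarrow X$ along $\pi$, as in (\ref{eq:pullback-dissolution}). The topology $J$ is then generated, over the canonical topology of $X_p$, by the pullback-stable closure of the $\pi$-inverse images of a generating family of covers for $Y \hookrightarrow X$. Now $Y \hookrightarrow X$ is given by the nucleus $\mu(\mathfrak a) = {\downarrow}\bigvee\mathfrak a$ on $\OO(X) = \mathrm{Idl}(L)$, whose extra covers (those not already covers of $X$) are generated by the families $\{{\downarrow}a_i \hookrightarrow {\downarrow}a\}$ with $a, a_i \in L$ and $\bigvee_i a_i = a$. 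Since the compact opens of $X$ are exactly the principal ideals ${\downarrow}a$ ($a \in L$) and $i^{-1}({\downarrow}a) = a$ in $L = \OO(Y)$, these are precisely the families $\{U_i \hookrightarrow U\}$ of compact opens with $i^{-1}(U) = \bigcup_i i^{-1}(U_i)$ occurring in (2). Applying $\pi^{-1}$ and imposing pullback-stability, the extra $J$-covers are exactly the families $\{\pi^{-1}(U_i) \cap V \hookrightarrow \pi^{-1}(U) \cap V\}$ for such $(U,U_i)$ and for an \emph{arbitrary} open $V \subseteq X_p$.

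It then remains to match ``gluing for all opens $V$'' with the ``gluing for all clopens $Z$'' of (2). Since $X_p$ is compact Hausdorff with a basis of clopens, every open $V$ is a union of clopens $Z_k$; given compatible sections over the $\pi^{-1}(U_i) \cap V$, one restricts to each $Z_k$, glues there using the clopen case, and then glues the resulting sections over $V$ using that $\GG$ is already a sheaf on $X_p$, with uniqueness guaranteeing the compatibility needed for this second gluing. Hence the clopen form is equivalent to the general open form, so (2) says exactly that the $X_p$-sheaf $\GG$ satisfies all extra $J$-covers; combined with the first paragraph this yields (1) $\Leftrightarrow$ (2). The main obstacle is the middle step: correctly identifying the Grothendieck topology of the pullback sublocale, and verifying both that the compact-open covers of $X$ generate it over the canonical topology of $X_p$ and that the coverage must be taken \emph{pullback-stably} (which is precisely what forces the intersection with $Z$ in (2)). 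The clopen-to-open reduction and the translation of $\mu$ into join-covers of principal ideals are then routine.
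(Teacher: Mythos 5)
Your proposal is correct and follows the same overall strategy as the paper: identify the subtopos $\sh(Y_d)\subseteq\sh(X_p)$ with the sheaves for a Grothendieck topology generated by the coherent covers together with the covers lifted from $Y\hookrightarrow X$, and then use the coverage lemma (pullback-stability of the generating families) to reduce the sheaf condition for the generated topology to the gluing condition in (2). The two arguments diverge in how the sublocale $Y_d$ is matched with that generated topology, and in where the pullback-stabilization lives. The paper works on the basis $\BB_p$ of clopens, so that the lifted covers $\{U_i\cap Z\to U\cap Z\}$ are already stable within the basis, and it identifies $\sh(\BB_p,J_\coh\join J_L)$ with $\sh(Y_d)$ by a universal-property argument: a sheaf for both topologies pushes forward along $\pi$ to a sheaf on $Y$, so the corresponding sublocale of $X_p$ maps into $Y$ over $X$, and the pullback square forces it to coincide with $Y_d$. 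You instead work on the whole frame $\OO(X_p)$, invoke the general fact that the preimage sublocale $\pi^{-1}(Y)$ is presented by the $\pi$-inverse images of a presentation of $Y\hookrightarrow X$ (correctly identified as the join-covers $\bigvee_i a_i=a$ of principal ideals, i.e.\ the families with $i^{-1}(U)=\bigcup_i i^{-1}(U_i)$), stabilize under pullback by arbitrary opens $V$, and then reduce from opens to clopens by covering $V$ with clopens and gluing twice. That reduction is fine (uniqueness over $Z_k\cap Z_l$ gives the compatibility needed for the second gluing), and the choice of arbitrary $V$ correctly sidesteps the fact that the clopen families are not pullback-stable on the full frame. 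The one place where your argument leans on an unproved assertion is the presentation of the preimage sublocale by pulled-back generators; this is true, but its justification is essentially the factorization-plus-universal-property argument that the paper spells out, so in a complete write-up you would need to supply that step (or a citation) rather than treat it as known.
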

\begin{proof}
\underline{$(1)\Rightarrow(2)$}. Suppose that $\GG \cong j_*\FF$ for $\FF$ in $\sh(Y_d)$. Then this follows from
\begin{gather*}
j^{-1}\left(\bigcup_{i \in I} U_i \cap Z  \right) = j^{-1}\left(\bigcup_{i \in I} U_i\right) \cap j^{-1}(Z) \\ = j^{-1}(U) \cap j^{-1}(Z) = j^{-1}(U \cap Z).
\end{gather*}
\underline{$(2)\Rightarrow(1)$}. Let $\BB_p$ be the set of clopen subsets of $X_p$. We interpret $\BB_p$ as a category with a unique arrow $Z \longrightarrow Z'$ whenever $Z \subseteq Z'$. Every sheaf $\GG$ on $X_p$ restricts to a presheaf on $\BB_p$. We now consider two Grothendieck topologies on $\BB_p$:
\begin{itemize}
\item the \emph{coherent topology} $J_\coh$ with as covering sieves on $Z \in \BB_p$ the sieves containing elements $Z_1,\dots,Z_k$ such that $Z = Z_1 \cup \dots \cup Z_k$;
\item the \emph{lifted Grothendieck topology} $J_L$ generated by the covering families $\{Z_i \to Z \}_{i \in I}$ such that there exists a compact open $U \subseteq X$ and compact opens $U_i \subseteq U$ satisfying
\begin{equation*}
Z_i = U_i \cap Z,~\forall i \in I \quad\text{and}\quad \bigcup_{i \in I} i^{-1}(U_i) = i^{-1}(U).
\end{equation*}
\end{itemize}
Note that $\sh(\BB_p,J_\coh) \simeq \sh(X_p)$, since $\BB_p$ is a basis for the topology with each $Z$ in $\BB_p$ compact. We have to prove that 
\begin{equation} \label{eq:intersection-of-toposes}
\sh(Y_d) ~\simeq~ \sh(\BB_p,J_\coh) ~\cap~ \sh(\BB_p,J_L)
\end{equation}
or in other words, $\sh(Y_d) \simeq \sh(\BB_p,J_\coh \join J_L)$, where $J_\coh \join J_L$ is the smallest Grothendieck topology containing $J_\coh$ and $J_L$. To see that this proves the statement, note that if $\GG$ satisfies $(2)$ then $\GG$ is a sheaf with respect to the covering families generating $J_L$. Since these covering families are stable under pullback, they are a coverage in the sense of Johnstone \cite[A2.1, Definition 2.1.9]{johnstone-elephant-1}, which means that $\GG$ is a sheaf for the Grothendieck topology $J_L$ generated by them. Because $\GG$ is a sheaf on $X_p$, it is a sheaf for $J_\coh$ as well, so $\GG$ is a sheaf on $Y_d$.

We now show that (\ref{eq:intersection-of-toposes}) holds. We already showed that if $\GG$ is a sheaf on $Y_d$, then it restricts to a sheaf for both $J_\coh$ and $J_L$. Conversely, suppose that $\GG$ restricts to a sheaf for both $J_\coh$ and $J_L$. Then $\pi_*\GG$ is a sheaf on $Y$, so there is a factorization
\begin{equation*}
\begin{tikzcd}
\sh(\BB_p,J_\coh \join J_L) \ar[r] \ar[d] & \sh(X_p) \ar[d,"{\pi_*}"] \\
\sh(Y) \ar[r,"{i_*}"] & \sh(X)
\end{tikzcd}.
\end{equation*}
Since \ref{eq:pullback-dissolution} is a pullback diagram, $Y_d$ is the biggest sublocale of $X_p$ such that $\pi \circ j$ factors through $i$. The topos $\sh(\BB_p,J_\mathrm{coh}\vee J_L)$ defines a sublocale with the same universal property. So (\ref{eq:intersection-of-toposes}) holds.
\end{proof}

The sheaves on $Y_d$ with nonempty set of global sections can be described with the following criterion. For the abuse of language ``is a sheaf on [\dots]'' we refer to the beginning of this subsection.

\begin{theorem} \label{thm:sheaves-on-dissolution}
Let $\GG$ be a sheaf on $X_p$ with $\GG(X_p) \neq \varnothing$. Then the following are equivalent:
\begin{enumerate}
\item $\GG$ is a sheaf on $Y_d$;
\item $\pi_*\GG$ is a sheaf on $Y$.
\end{enumerate}
\end{theorem}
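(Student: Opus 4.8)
The plan is to treat the two implications separately, with $(1)\Rightarrow(2)$ being formal and $(2)\Rightarrow(1)$ carrying all the content. For $(1)\Rightarrow(2)$ I would simply read off the commutative square (\ref{eq:pullback-pushforwards}): since $\pi\circ j = i\circ\delta$ in the pullback (\ref{eq:pullback-dissolution}), if $\GG\cong j_*\FF$ for some sheaf $\FF$ on $Y_d$, then $\pi_*\GG\cong\pi_*j_*\FF\cong i_*\delta_*\FF$ lies in the essential image of $i_*$. As $\delta_*\FF$ is a sheaf on $Y$, this exhibits $\pi_*\GG$ as a sheaf on $Y$.

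The substance is $(2)\Rightarrow(1)$. By Lemma \ref{lmm:sheaves-on-dissolution} it suffices to verify the stable sheaf condition for $\GG$. First I would unwind hypothesis $(2)$. Note that $\pi_*\GG$ is automatically a sheaf on $X$, so the condition makes sense; and ``$\pi_*\GG$ is a sheaf on $Y$'' means it lies in the essential image of $i_*$. Since the compact opens of $X$ form a basis and correspond under $i^{-1}$ to the elements of $L=\OO(Y)$, being in this image is equivalent to the gluing condition for every cover $i^{-1}(U)=\bigcup_i i^{-1}(U_i)$ by compact opens $U_i\subseteq U$: sections $s_i\in(\pi_*\GG)(U_i)=\GG(U_i)$ agreeing on overlaps glue uniquely to a section over $U$. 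This is exactly the stable sheaf condition in the special case $Z=X_p$. Hence the whole problem reduces to upgrading the gluing from $Z=X_p$ to an arbitrary clopen $Z\subseteq X_p$.

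This is where the hypothesis $\GG(X_p)\neq\varnothing$ enters, and it is the crux of the proof. I would fix a global section $\sigma\in\GG(X_p)$, a cover as above, and a clopen $Z$ with complement $Z'=X_p\setminus Z$. Given $s_i\in\GG(U_i\cap Z)$ agreeing on overlaps, extend each to $\tilde s_i\in\GG(U_i)$ by gluing $s_i$ with $\sigma|_{U_i\cap Z'}$ along the clopen partition $U_i=(U_i\cap Z)\sqcup(U_i\cap Z')$; this is legitimate because $\GG$ is a sheaf on $X_p$ and the two pieces are disjoint clopens. The extended sections still agree on overlaps: on $U_i\cap U_j\cap Z$ because the $s_i$ do, and on $U_i\cap U_j\cap Z'$ because both equal $\sigma$. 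Applying the $Z=X_p$ gluing yields a unique $\tilde s\in\GG(U)$ with $\tilde s|_{U_i}=\tilde s_i$, and $s=\tilde s|_{U\cap Z}$ is the desired glued section, satisfying $s|_{U_i\cap Z}=s_i$. Uniqueness is symmetric: any two candidates over $U\cap Z$ extend over $U$ using $\sigma$ on $U\cap Z'$, then agree on every $U_i$, so the $Z=X_p$ uniqueness forces them to coincide over $U$, hence over $U\cap Z$.

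The main obstacle is the $(2)\Rightarrow(1)$ direction, and within it two points deserve care. The first is the precise translation of ``$\pi_*\GG$ is a sheaf on $Y$'' into the concrete $Z=X_p$ gluing condition, which rests on compact opens being a basis of $X$ and on the local-operator description of the essential image of $i_*$ by the covers that $i^{-1}$ creates; this parallels the reasoning already used in the proof of Lemma \ref{lmm:sheaves-on-dissolution}. The second is the bootstrapping step, whose entire point is that a nonempty global section supplies a canonical way to fill in sections over the complementary clopen $Z'$, thereby reducing every clopen $Z$ to the top clopen $X_p$. This is exactly why the hypothesis $\GG(X_p)\neq\varnothing$ is indispensable and cannot be omitted.
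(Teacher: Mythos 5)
Your proposal is correct and follows essentially the same strategy as the paper's proof: the direction $(1)\Rightarrow(2)$ is formal, and for $(2)\Rightarrow(1)$ the key idea in both arguments is to use the nonempty global section to extend the given sections over the complementary clopen piece, apply the sheaf condition for $\pi_*\GG$ on $Y$ to the extended family, and then restrict back to $Z$. The only (cosmetic) difference is that the paper first reduces $Z$ to a single locally closed set $U-V$ and glues the global section only over $V$, whereas you glue it over all of $U_i\cap(X_p\setminus Z)$ directly, which avoids that reduction step.
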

\begin{proof}
The direction $(1) \Rightarrow (2)$ is trivial, we prove the other direction. Suppose $\pi_*\GG$ is a sheaf on $Y$. We show that property $(2)$ of Lemma \ref{lmm:sheaves-on-dissolution} holds. So take $U$, $Z$, $(U_i)_{i \in I}$, $(s_i)_{i \in I}$ like in $(2)$ from Lemma \ref{lmm:sheaves-on-dissolution}. We can write $Z$ as a finite union of sets of the form $U'-V'$ with $V' \subseteq U' \subseteq X$ compact open. Without loss of generality, we can assume that $Z$ itself is of this form, say $Z = U' - V'$. We can further assume that $U' = U$, so $Z_i = U_i - V$. Take a global section $t \in \GG(X_p)$. We write $s_i'$ for the unique section on $U_i \cup V$ restricting to $s_i$ on $Z_i$ and to $t|_V$ on $V$. Since $\pi_*\GG$ is a sheaf on $Y$, the sections $s_i'$ glue to a unique section $s' \in \GG(U)$. Then the section $s = s'|_Z$ is the unique gluing of the family $(s_i)_{i \in I}$.
\end{proof}

\section{Duality for general noncommutative frames}
\label{sec:duality-general-ncframe}

In this section we prove the duality between noncommutative frames and sheaves on dissolution locales. This is a variation on the noncommutative Priestley duality developed in \cite{kcv-priestley}. So we start by recalling Priestley duality, noncommutative Priestley duality, and the associated terminology.

\subsection{Priestley duality} We discuss some classical results and some results and terminology from \cite{kcv-priestley}. 

For $D$, $D'$ distributive lattices, a map $f: D \longrightarrow D'$ is called \emph{proper} if 
\begin{equation*}
\forall y \in D',~\exists x \in D,~y \leq f(x). 
\end{equation*}
Now $\mathbf{DL}_0$ denotes the category of distributive lattices with a least element $0$, with as morphisms the proper maps preserving $\meet$, $\join$ and $0$. Further $\mathbf{DL}_{01}$ denotes the category of distributive lattices with a least element $0$ and a greatest element $1$, with as morphisms the maps preserving $\meet$, $\join$, $0$ and $1$. The latter maps are automatically proper, so $\mathbf{DL}_{01}$ is a full subcategory of $\mathbf{DL}_0$. The lattices in $\mathbf{DL}_{01}$ are called the \emph{bounded distributive lattices}.

Recall that $\mathbf{2}$ denotes the bounded distributive lattice with as only elements $0$ and $1$. Take $D$ in $\mathbf{DL}_0$. A \emph{prime filter on $D$} is a proper map $p: D \longrightarrow \mathbf{2}$ preserving $0$, $\meet$ and $\join$. By looking at the preimages of $1 \in \mathbf{2}$ we can alternatively describe a prime filter as a nonempty upwards closed subset $F \subset D$ that is closed under $\meet$, does not contain $0$, and satisfies $a \vee b \in F ~\Rightarrow~ a \in F \text{ or } b \in F$. For every $a \in D$, we can define
\[
\widehat{a} = \{ F ~\text{prime filter} : a \in F \}
\]
The sets $\widehat{a}$ are the basis for a topology on the set of prime filters on $D$.

\begin{definition}
Let $D$ be a distributive lattice with $0$. Then the \emph{prime spectrum} $\Spec(D)$ of $D$ is the set of prime filters, equipped with the topology generated by the subsets $\widehat{a} ~\subseteq~ \Spec(D)$ for $a \in D$.
\end{definition}

It is easy to see that the compact open subsets of $\Spec(D)$ are in bijective correspondence with the elements of $D$. In particular, $\Spec(D)$ is compact if and only if $D$ is bounded.

Recall from \cite{hochster} that a topological space $X$ is called a \emph{spectral space} if the following conditions are satisfied:
\begin{itemize}
\item $X$ is compact;
\item $X$ is sober;
\item the compact open subsets of $X$ are closed under finite intersections and form a basis for the topology.
\end{itemize}
A \emph{locally spectral space} is a space that can be covered by open subsets that are spectral spaces. A continuous map of (locally) spectral spaces is called \emph{spectral} if the inverse images of compact open sets are again compact open. The category of spectral spaces (and spectral maps between them) is called $\mathbf{Spec}$. The full subcategory of locally spectral spaces is called $\mathbf{LSpec}$. The following is classical:

\begin{proposition}
The assignment $D \mapsto \Spec(D)$ defines an equivalence of categories between $\mathbf{DL}_0$ and $\mathbf{LSpec}$, and between $\mathbf{DL}_{01}$ and $\mathbf{Spec}$.
\end{proposition}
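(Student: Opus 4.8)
The plan is to exhibit an explicit quasi-inverse to $\Spec$ and verify that the two comparison maps are natural isomorphisms, treating the bounded and the general case in parallel. The quasi-inverse is the compact-open functor $\mathcal{K}$, sending a locally spectral space $X$ to the distributive lattice $\mathcal{K}(X)$ of its compact open subsets (ordered by inclusion, with $\meet = \cap$, $\join = \cup$ and $0 = \varnothing$), and sending a spectral map $g : X \to X'$ to the inverse-image homomorphism $g^{-1} : \mathcal{K}(X') \to \mathcal{K}(X)$. First I would check that $\mathcal{K}$ is well defined: for $X$ spectral, $\mathcal{K}(X)$ is a bounded distributive lattice with top $X$, while for $X$ merely locally spectral it is a distributive lattice with $0$ but possibly no top, and $g^{-1}$ preserves $\meet$, $\join$, $0$ (and $1$ in the compact case) and is a proper map in the sense of the preceding definition. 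Both $\Spec$ and $\mathcal{K}$ are contravariant, so what is really being proved is that they form a pair of mutually quasi-inverse contravariant functors, which is exactly the asserted equivalence.

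Next I would produce the two natural isomorphisms $\eta$ and $\epsilon$. The first, $\eta_D : D \to \mathcal{K}(\Spec(D))$, $a \mapsto \widehat{a}$, is already a set-theoretic bijection by the remark in the text that compact opens of $\Spec(D)$ biject with elements of $D$; it remains to verify that it is a lattice isomorphism, i.e. that $\widehat{a \meet b} = \widehat a \cap \widehat b$, $\widehat{a \join b} = \widehat a \cup \widehat b$ and $\widehat 0 = \varnothing$, and that it is natural via $\Spec(f)^{-1}(\widehat a) = \widehat{f(a)}$. Injectivity of $a \mapsto \widehat a$ and the compactness of each $\widehat a$ both rest on the Prime Filter Theorem for distributive lattices: if $a \not\leq b$ there is a prime filter containing $a$ but not $b$, and a basic cover $\widehat a \subseteq \bigcup_i \widehat{b_i}$ admits a finite subcover since otherwise $a$ would avoid the ideal generated by the $b_i$ and could be separated from it by a prime filter. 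The second, $\epsilon_X : X \to \Spec(\mathcal{K}(X))$, sends a point $x$ to the prime filter $F_x = \{ U \in \mathcal{K}(X) : x \in U \}$; continuity and openness onto its image follow from $\epsilon_X^{-1}(\widehat U) = U$.

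The heart of the argument, and the step I expect to be the main obstacle, is showing that $\epsilon_X$ is a bijection, since this is precisely where sobriety is indispensable. Injectivity is immediate from the $T_0$ property, as the compact opens form a basis. For surjectivity, given a prime filter $F$ on $\mathcal{K}(X)$ I would form the closed set $C = X \setminus \bigcup \{ U \in \mathcal{K}(X) : U \notin F \}$, show that primality of $F$ forces $C$ to be irreducible and nonempty, and then invoke sobriety of $X$ to produce the unique generic point $x$ with $F = F_x$. Because $X$ is locally spectral its compact opens form a basis, so $\epsilon_X$ is then a homeomorphism. The companion fact that $\Spec(D)$ is itself sober (equivalently, that it is the space of points of the coherent locale $\mathrm{Idl}(D)$) is what makes $\eta$ and $\epsilon$ mutually compatible.

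Finally I would separate the two cases and check that nothing is lost at the level of morphisms. In the bounded case, morphisms of $\mathbf{DL}_{01}$ preserve $1$ and are automatically proper, $\Spec(D)$ is compact precisely because $D$ has a top (as already noted in the excerpt), and $\mathcal{K}(X)$ has top $X$; this yields $\mathbf{DL}_{01} \simeq \mathbf{Spec}$. In the general case one uses that each basic open $\widehat a$ is itself a spectral space, namely the spectrum of the bounded lattice $\{ x \in D : x \leq a \}$, so that $\Spec(D)$ is covered by spectral opens and hence locally spectral, and conversely that $g^{-1}$ is proper exactly because $g$ is spectral. Matching proper lattice maps with spectral maps is the one piece of bookkeeping needing care, but it follows from $\Spec(f)^{-1}(\widehat a) = \widehat{f(a)}$ together with the compactness statements above. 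With $\eta$ and $\epsilon$ natural isomorphisms in both cases, $\Spec$ and $\mathcal{K}$ are quasi-inverse, which proves the proposition.
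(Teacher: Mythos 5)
The paper offers no proof of this proposition: it is introduced with ``The following is classical'' and used as a black box, so there is no argument of the authors' to compare yours against line by line. What you have written is the standard Stone--Hochster duality proof, and in the bounded case ($\mathbf{DL}_{01}$ versus $\mathbf{Spec}$) every step you outline is sound: the prime filter separation theorem gives both injectivity of $a \mapsto \widehat{a}$ and compactness of each $\widehat{a}$, compactness identifies $\mathcal{K}(\Spec(D))$ with $\{\widehat{a} : a \in D\}$, and sobriety is exactly what makes $\epsilon_X$ surjective via the irreducible closed set $C = X \setminus \bigcup\{U \in \mathcal{K}(X) : U \notin F\}$. Your identification of $\widehat{a}$ with $\Spec(a{\downarrow})$ to get local spectrality of $\Spec(D)$, and of properness of $f$ with well-definedness of $F \mapsto f^{-1}(F)$ on (nonempty) prime filters, are also the right pieces of bookkeeping.

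One caveat in the unbounded case. Your claim that ``for $X$ merely locally spectral, $\mathcal{K}(X)$ is a distributive lattice with $0$'' does not follow from the definition the paper gives (``a space that can be covered by open subsets that are spectral spaces''): with that definition alone the intersection of two compact opens need not be compact (the underlying space of a non-quasi-separated scheme is the standard counterexample), in which case $\mathcal{K}(X)$ is not closed under $\meet$ and $X$ cannot lie in the essential image of $\Spec$, whose compact opens are always closed under finite intersection. The duality requires, in addition, that the compact open subsets of $X$ be closed under finite intersections; this closure condition is part of the object category in the classical formulation the paper is implicitly invoking. With that hypothesis made explicit, your construction of $\epsilon_X$ and the verification that it is a homeomorphism go through exactly as you describe.
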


For the specialization order on a (locally) spectral space $X$, we will use the convention from \cite{frame-of-nuclei-spatial}, i.e.\ 
\begin{equation*}
x \leq y \quad\Leftrightarrow\quad x \in \overline{\{y\}}.
\end{equation*}
So for two filters $F$ and $F'$ we get $F \leq F'$ if and only if $F \subseteq F'$. This is the opposite convention as in \cite{kcv-priestley}.
For a (locally) spectral space $X$, the \emph{patch topology}, as introduced by Hochster in \cite{hochster}, is the topology generated by the compact open subsets of $X$ and their complements. We write $X_\patch$ for the set $X$ equipped with the patch topology. Note that $X_p$ is Hausdorff; in particular, the specialization order on $X_p$ is trivial.

A \emph{Priestley space} is a triple $(X,\tau,\leq)$ with $X$ a set, $\tau$ a topology on $X$, and $\leq$ a partial order on $X$, such that $(X,\tau)$ is compact and moreover
\begin{align}
\begin{split}
\forall x,y \in X \text{ such that }x \not\leq y,~ \text{there are disjoint} \\ \text{clopen subsets }U \in \tau^\uparrow,~V \in \tau^\downarrow,~ x \in U,~y \in V.
\end{split}
\end{align}
We write $\mathbf{PS}$ for the category of Priestley spaces and continuous monotonous maps between them. If $D$ is a bounded distributive lattice, then it is straightforward to check that
\begin{equation*}
(\Spec(D),\mathsf{patch},\leq)
\end{equation*}
is a Priestley space (with $\mathsf{patch}$ the patch topology and $\leq$ the specialization order on $\Spec(D)$), and that conversely every Priestley space is of this form. This induces a equivalence of categories $\mathbf{DL}_{01}^\op \simeq \mathbf{PS}$, called \emph{Priestley duality}. It first appeared in the work of Priestley \cite{priestley-duality}, where it was shown directly, without using the existing duality between bounded distributive lattices and spectral spaces. For more details on the two dualities and their interaction, we refer to the paper \cite{cornish-priestley} by Cornish.

In \cite{kcv-priestley} it was shown that the same functor 
\begin{equation*}
D ~\mapsto~ (\Spec(D),\mathsf{patch},\leq)
\end{equation*}
defines a duality between $\mathbf{DL}_0$ and the full subcategory $\mathbf{LPS} \subseteq \mathbf{PS}$ of \emph{local Priestley spaces}, see \cite[Subsection 3.2]{kcv-priestley} for the definition and the proof. However, the distributive lattices $D$ appearing in this paper will often be frames, and since frames always have a top element, the associated local Priestley space $(\Spec(D),\mathsf{patch},\leq)$ is a Priestley space. 

We can say even more. Recall from \cite[Section 3]{frame-of-nuclei-spatial} that a Priestley space $(X,\tau,\leq)$ is called an \emph{Esakia space} if whenever $U$ is $\tau$-clopen, the downset $U\!\!\downarrow$ is $\tau$-clopen as well. Further, an Esakia space is called \emph{extremally order-disconnected} if the closure of each $\tau$-open upset is $\tau$-clopen. Then for $D$ a bounded distributive lattice, it turns out that the associated Priestley space is an Esakia space if and only if $D$ is a Heyting algebra, and that $D$ is an extremally order-disconnected Esakia space if and only if $D$ is a frame, see \cite[Section 3]{frame-of-nuclei-spatial} and originally Pultr and Sichler \cite{pultr-sichler}.

Let $D$ and $D'$ be frames. Consider a continuous monotonous map
\begin{equation*}
\psi : (\Spec(D),\mathsf{patch},\leq) \longrightarrow (\Spec(D'),\mathsf{patch},\leq)
\end{equation*}
between the associated extremally ordered-disconnected Esakia spaces. Then $\psi$ corresponds to a morphism $g : D \longrightarrow D'$ in $\mathbf{DL}_{01}$. It is natural to ask what conditions on $\psi$ are necessary and sufficient such that $g$ is a frame morphism. This is another question solved in \cite{pultr-sichler}. We will not need their result in this paper, since we will formulate everything in terms of the underlying locales.

\subsection{Noncommutative Priestley duality}
\label{ssec:nc-frames-duality}

We recall the following from \cite{kcv-priestley}. Let $\sh(\mathsf{LPS})$ be the category of pairs $(X,\FF)$ where $X$ is a local Priestley space and $\FF$ is a sheaf on $X$ with global support; morphisms $(X,\FF) \to (X',\FF')$ are given by a pair $(f,\lambda)$, with $f : X \to X'$ continuous monotonous and $\lambda : \FF' \to f_*\FF$ a sheaf morphism. Further, let $\mathbf{SDL}$ be the category of left-handed strongly distributive skew lattices with $0$, with as morphisms the skew lattice homomorphisms $\phi : S \to S'$ preserving $0$, that are \emph{proper} in the sense that $\forall y \in S',~\exists x \in S,~ y \leq \phi(x)$.

Consider the functor
\begin{gather*}
(-)^\ast : \sh(\mathsf{LPS})^\op \longrightarrow \mathbf{SDL}
\end{gather*}
with $A = (X,\FF)^\ast$ the skew lattice defined as follows. The elements are pairs $(U,s)$ with $U$ an upwards closed compact open set and $s \in \FF(U)$ a section. Meet and join are defined by
\begin{align*}
&(U,s) \meet (V,t) = (U \cap V, s|_{U \cap V}) \\
&(U,s) \join (V,t) = (U \cup V, s|_{U-V} \cup t|_V)
\end{align*}
where $s|_{U-V} \cup t|_V$ is the unique section on $U \cup V$ that restricts to $s|_{U-V}$ on $U-V$ and to $t|_V$ on $V$.

\begin{theorem}[{\cite[Theorem 3.7]{kcv-priestley}}] \label{thm:nc-priestley}
The functor $(-)^\ast$ induces a dual equivalence between the category $\mathbf{SDL}$ is and the category $\sh(\mathsf{LPS})$.
\end{theorem}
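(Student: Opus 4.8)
The plan is to prove the dual equivalence by exhibiting an explicit quasi-inverse to $(-)^\ast$ and then checking the two natural isomorphisms, leaning throughout on the classical duality $\mathbf{DL}_0 \simeq \mathbf{LPS}$ recalled above. The guiding idea is that a left-handed strongly distributive skew lattice $A$ is determined by two pieces of data: its commutative shadow $D = A/\DD$, which is a distributive lattice with $0$ by Leech's First Decomposition Theorem, and the way the elements of $A$ sit as ``sections'' over the $\DD$-classes, governed by the restriction operation available in any strongly distributive skew lattice. So I would construct a functor $\Phi : \mathbf{SDL} \to \sh(\mathsf{LPS})^\op$ recovering exactly this data and show that $\Phi$ and $(-)^\ast$ are mutually quasi-inverse.

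First I would verify that $(-)^\ast$ is well defined: given $(X,\FF)$ in $\sh(\mathsf{LPS})$, the pairs $(U,s)$ with $U$ a compact open upset and $s \in \FF(U)$ form a left-handed strongly distributive skew lattice with bottom element the pair consisting of $\varnothing$ and its unique section, the verification of the idempotency, associativity, absorption and distributivity identities being a routine translation of the sheaf axioms (gluing supplies the section $s|_{U-V} \cup t|_V$ needed for $\join$). On morphisms, a pair $(f,\lambda)$ yields a $0$-preserving skew lattice homomorphism, and properness follows from the assumption that $\FF$ has global support.

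Next I would build $\Phi$. For $A \in \mathbf{SDL}$, set $D = A/\DD$ and let $X_A = (\Spec(D), \mathsf{patch}, \leq)$ be the associated local Priestley space, so that the compact open upsets $\widehat{u}$ are in bijection with the elements $u \in D$. I would define a presheaf on this basis by $\FF_A(\widehat{u}) = \{ a \in A : [a] = u \}$, with restriction maps given by the unique restriction of an element of $A$ to a smaller $\DD$-class, as recalled in the preliminaries. The crucial point is the sheaf condition: since each $\widehat{u}$ is compact it suffices to check gluing for finite covers $u = u_1 \join \dots \join u_n$, and here one shows that a family $a_i$ with $[a_i] = u_i$ that agrees on the overlaps $\widehat{u_i \meet u_j}$ necessarily consists of pairwise commuting elements, so that the join $a_1 \join \dots \join a_n$ exists and is the unique section restricting to each $a_i$. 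This presheaf then extends uniquely to a sheaf $\FF_A$ on $X_A$, which has global support because every $\DD$-class is nonempty, and I set $\Phi(A) = (X_A, \FF_A)$. On a morphism $\phi : A \to A'$, the induced lattice map $\phi/\DD : D \to D'$ is proper, hence dualizes under classical Priestley duality to a continuous monotone map $f : X_{A'} \to X_A$, while $\phi$ itself supplies the sheaf comparison $\lambda : \FF_A \to f_*\FF_{A'}$.

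Finally I would check the two natural isomorphisms. For $A \in \mathbf{SDL}$ the elements of $\Phi(A)^\ast$ are pairs $(\widehat{u}, a)$ with $[a]=u$, and $(\widehat{u},a) \mapsto a$ is an isomorphism of skew lattices, natural in $A$. In the other direction, for $(X,\FF)$ the commutative shadow of $(X,\FF)^\ast$ is the lattice of compact open upsets of $X$, whose Priestley dual recovers $X$, and the sheaf reconstructed from the $\DD$-classes is canonically $\FF$; naturality is then routine bookkeeping. I expect the main obstacle to be precisely the sheaf condition in the construction of $\FF_A$: the implication that agreement on overlaps forces the sections to commute, so that their skew-lattice join furnishes the unique gluing. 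This is exactly the step where the full strength of strong distributivity (equivalently, symmetry, distributivity and normality) is needed, together with the compactness that reduces arbitrary covers to finite ones; the functoriality and naturality claims, by contrast, are formal once this correspondence between finite joins and finite gluings is in place.
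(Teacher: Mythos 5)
First, note that the paper does not prove this theorem itself: it is quoted from \cite{kcv-priestley} (Theorem 3.7 there), where the dual object of $A$ is built as an \'etale space over the Priestley space $(\Spec(A/\DD),\mathsf{patch},\leq)$ whose stalk at a prime filter $p$ is the primitive quotient $A_p/{\sim_p}$ from Definition \ref{def:equivalence-rel} (the same construction this paper reuses in Subsections \ref{ssec:primitive-quotients} and \ref{ssec:etale-space} for the front topology). Your proposal replaces that stalkwise construction by defining a presheaf on the sets $\widehat{u}$ via $\FF_A(\widehat{u}) = \{a \in A : [a]=u\}$ and asserting that it ``extends uniquely to a sheaf.'' That is the gap. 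The objects of $\sh(\mathsf{LPS})$ are sheaves on the \emph{patch} topology --- they have to be, since the join $(U,s)\join(V,t)=(U\cup V, s|_{U-V}\cup t|_V)$ restricts $s$ to the set $U-V$, which is patch-open but not spectral-open --- and the compact open upsets $\widehat{u}$ are a basis only for the spectral topology, not for the patch topology. A presheaf on a family of opens that is not a basis has no unique sheaf extension, and in fact the data you record genuinely underdetermines $\FF$.

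A minimal example: let $X$ be the two-point Priestley space dual to the three-element chain, so $X_\patch$ is discrete with points $0\leq 1$, and a sheaf on it is a pair of stalks $(S_0,S_1)$ with $\FF(\{0,1\})=S_0\times S_1$. The only compact open upsets are $\varnothing$, $\{1\}$, $\{0,1\}$, so your presheaf records $S_1$ and $S_0\times S_1$ with one projection, but not the projection $S_0\times S_1\to S_0=\FF(\{0\})$; different choices of that projection give non-isomorphic sheaves with identical sections over upsets, and correspondingly different join operations on $(X,\FF)^\ast$ (the meet only sees restrictions to upsets, but the join sees $\FF(\{0\})$). Equivalently, on the skew-lattice side your reconstruction uses only the $\DD$-classes and the restriction maps, i.e.\ essentially only $\meet$; the information carried by $\join$ is exactly what the relations $\sim_p$ ($a\sim_p b$ iff $(a\meet d)\join c=(b\meet d)\join c$ for suitable $c,d$, i.e.\ $a$ and $b$ agree on a patch-neighbourhood $\widehat{[d]}\setminus\widehat{[c]}$ of $p$) extract, and it cannot be omitted. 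Your verification that a matching family over a cover $u=u_1\join\dots\join u_n$ consists of commuting elements whose join is the unique gluing is correct and is indeed the point where strong distributivity (symmetry plus normality) is used; but to complete the proof you must define $\FF_A$ on all patch-opens, e.g.\ by taking the \'etale space $\bigsqcup_p A_p/{\sim_p}$ with the topology generated by the images of locally closed sets, as in \cite{kcv-priestley}.
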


For $A = (X,\FF)^\ast$, the commutative shadow $A/\mathcal{D}$ is the unique distributive lattice such that $\Spec(A/\mathcal{D}) = X$, see the previous subsection. Note that $A/\mathcal{D}$ is a bounded distributive lattice if and only if $X$ is a Priestley space. So let $\sh(\mathsf{PS}) \subset \sh(\mathsf{LPS})$ be the full subcategory consisting of the pairs $(X,\FF)$ such that $X$ is a Priestley space, and let $\mathbf{SDL}_\mathrm{01} \subset \mathbf{SDL}$ be the full subcategory consisting of the skew lattices $A$ in $\mathbf{SDL}$ such that $A/\mathcal{D}$ is bounded. Then we find:

\begin{corollary}
The functor $(-)^\ast$ induces a dual equivalence between the category $\mathbf{SDL}_\mathrm{01}$ and the category $\sh(\mathsf{PS})$.
\end{corollary}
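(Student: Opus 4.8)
The plan is to deduce this directly from Theorem \ref{thm:nc-priestley} by restricting the dual equivalence $(-)^\ast : \sh(\mathsf{LPS})^\op \to \mathbf{SDL}$ to the relevant full subcategories. Since $\sh(\mathsf{PS})$ and $\mathbf{SDL}_{01}$ are \emph{full} subcategories, fullness and faithfulness of the restricted functor are inherited automatically, and the unit and counit isomorphisms witnessing the equivalence restrict to the subcategories without further work. Hence the only thing that genuinely needs checking is that the equivalence matches these two subcategories bijectively on objects: that $(-)^\ast$ carries objects of $\sh(\mathsf{PS})$ to objects of $\mathbf{SDL}_{01}$, and that its quasi-inverse carries objects of $\mathbf{SDL}_{01}$ into $\sh(\mathsf{PS})$, each up to isomorphism.

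First I would record the object-level correspondence already observed just before the statement: for $A = (X,\FF)^\ast$, the commutative shadow $A/\mathcal{D}$ is the unique distributive lattice with $\Spec(A/\mathcal{D}) = X$, and $A/\mathcal{D}$ is bounded precisely when $X$ is a Priestley space. This is exactly the condition separating $\mathbf{SDL}_{01}$ from $\mathbf{SDL}$ on the algebraic side, and $\sh(\mathsf{PS})$ from $\sh(\mathsf{LPS})$ on the geometric side.

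Combining these observations closes the argument in both directions. If $(X,\FF) \in \sh(\mathsf{PS})$, then $X$ is Priestley, so $A = (X,\FF)^\ast$ has bounded commutative shadow and therefore lies in $\mathbf{SDL}_{01}$. Conversely, if $A \in \mathbf{SDL}_{01}$, then $A/\mathcal{D}$ is bounded, so the associated space $X = \Spec(A/\mathcal{D})$ is a Priestley space and the corresponding pair lies in $\sh(\mathsf{PS})$. Thus the dual equivalence of Theorem \ref{thm:nc-priestley} restricts to a dual equivalence between $\mathbf{SDL}_{01}$ and $\sh(\mathsf{PS})$.

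I do not expect any real obstacle here: the substantive content is entirely carried by Theorem \ref{thm:nc-priestley} together with the stated equivalence ``$X$ Priestley $\Leftrightarrow$ $A/\mathcal{D}$ bounded''. The only mild point requiring care is confirming that the boundedness condition is stable under the equivalence in both directions, and this is immediate once one notes that the commutative shadow functor $A \mapsto A/\mathcal{D}$ corresponds under $(-)^\ast$ to the base-space functor $(X,\FF) \mapsto X$, so that ``bounded shadow'' and ``Priestley base'' are two descriptions of the same property transported across the equivalence.
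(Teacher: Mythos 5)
Your proof is correct and follows essentially the same route as the paper, which gives no separate argument for this corollary beyond the remark immediately preceding it that $A/\mathcal{D}$ is bounded if and only if $X$ is a Priestley space, so that the dual equivalence of Theorem \ref{thm:nc-priestley} restricts to these full subcategories. Nothing further is needed.
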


The inverse functor to $(-)^\ast$ is written as $(-)_\ast$. For the explicit description of $(-)_\ast$ we refer to \cite{kcv-priestley}. In hindsight, we can of course define $A_\ast$ as the unique pair $(X,\FF)$, up to isomorphism, such that $A \cong (X,\FF)^\ast$. This is sometimes already enough to compute $A_\ast$ in practice, as we will demonstrate in the following example.

\begin{example}
Let $A = \PP(R,S)$ be the noncommutative frame of partial functions from $R$ to $S$, as in Example \ref{ex:partial-func}. We write $X = \Spec(A/\mathcal{D})$. Then $A_\ast = (X',\FF)$ where $X' = (X,\mathrm{patch},\leq)$ is the Priestley space associated to $X$, and $\FF$ is a sheaf on $X'$ (or in other words a sheaf on $X_p$ where $X_p$ is $X$ with the patch topology).

We know that $A/\mathcal{D}\cong\PP(R)$. The elements of $X=\Spec(A/\mathcal{D})$ are then the prime filters in $\PP(R)$. Since $\PP(R)$ is a boolean algebra, the prime filters are exactly the ultrafilters. So the elements of $X$ are the ultrafilters of $\PP(R)$, and the topology on $X$ is generated by the subsets
\[
\hat{U} = \{ F \text{ ultrafilter in }\PP(R) : U \in F \}
\]
for $U \in \PP(R)$. In other words, $X$ is the Stone--\v{C}ech compactification of $R$ (where $R$ has the discrete topology).

Note that $X$ is Hausdorff, and as a result $X_p = X$. So $\FF$ is just a sheaf on $X$. It necessarily satisfies
\begin{equation*}
\FF(\hat{U}) = \{ f: U \to S \}
\end{equation*}
(the set of all functions from $U$ to $S$), for $U \in \PP(R)$. Since the sets $\hat{U}$ are a basis for the topology, this uniquely determines $\FF$.
\end{example}

\subsection{Duality for noncommutative frames}

We want to show that left-handed noncommutative frames correspond to pairs $(Y,\FF)$, where $Y$ is a locale and $\FF$ is a sheaf on the dissolution locale $Y_d$ such that $\FF(Y_d) \neq \varnothing$.

So let $A$ be a left-handed strongly distributive skew lattice with $0$ such that $A/\mathcal{D}$ is a frame. Define $Y$ to be the locale with $\OO(Y) = A/\mathcal{D}$. Then there is a pullback diagram
\begin{equation} \label{eq:pullback-dissolution-2}
\begin{tikzcd}
Y_d \ar[r,"{j}"] \ar[d,"{\delta}"'] & X_\patch \ar[d,"{\pi}"] \\
Y \ar[r,"{i}"'] & X
\end{tikzcd}.
\end{equation}
where $X = \Spec(A/\mathcal{D})$ and $X_p$ is $X$ with the patch topology, see Section \ref{sec:dissolution}. Further, $A_\ast = (X,\GG)$ with $\GG$ some sheaf on $X_p$.

What is a necessary and sufficient condition on $\GG$ such that $A$ is a noncommutative frame? We will use the following criterion:

\begin{theorem}[{\cite[Theorem 5.1]{completeness-issues}}] \label{thm:completeness-issues}
Let $A$ be a strongly distributive skew lattice with $0$ such that $A/\mathcal{D}$ is a frame. Then $A$ is a noncommutative frame if and only if $A$ is join complete.
\end{theorem}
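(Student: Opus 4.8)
The forward implication is immediate, since join completeness is part of the definition of a noncommutative frame. So the plan is to prove the converse: assuming $A$ is join complete (and strongly distributive with $0$, with $L = A/\DD$ a frame), I would establish the two infinite distributive laws \eqref{eq:inf-dist-law}. The whole argument rests on transporting each computation to the frame $L$, where the infinite distributive law is already available, and then lifting the resulting equality back to $A$ by a uniqueness argument. The two structural facts I would exploit are strong distributivity (which, via normality, makes each downset $a{\downarrow}$ a genuine, hence commutative, lattice, and in particular a commuting subset) and the existence and uniqueness of the restriction $\restricted{a}{u}$ of an element $a$ to a $\DD$-class $u \leq [a]$.

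First I would prove the auxiliary claim that the projection $\pi_A : A \to L$ preserves suprema of commuting subsets, i.e.\ that for a commuting family $\{x_i : i \in I\}$ with supremum $s = \bigvee_i x_i$ one has $[s] = \bigvee_i [x_i]$ in $L$. Since $\pi_A$ is monotone, $[s]$ is an upper bound of $\{[x_i]\}$; to see it is the least, take any upper bound $v \in L$, set $w = [s] \meet v$, and consider the restriction $\restricted{s}{w}$. Each $x_i$ equals $\restricted{s}{[x_i]}$ by uniqueness of restrictions, and $[x_i] \leq w$, so transitivity of restriction gives $x_i = \restricted{(\restricted{s}{w})}{[x_i]} \leq \restricted{s}{w}$. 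Hence $\restricted{s}{w}$ is an upper bound of $\{x_i\}$, forcing $s \leq \restricted{s}{w} \leq s$, so $[s] = w \leq v$. This is the step I expect to be the most delicate, as it is precisely what couples join completeness in $A$ with the frame structure on $L$.

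Next I would record the monotonicity of meet: if $a \leq b$ then $x \meet a \leq x \meet b$ and $a \meet x \leq b \meet x$. Both follow from strong distributivity; for instance $a \leq b$ gives $b = a \join b = b \join a$, and left distributivity yields $x \meet b = x \meet (a \join b) = (x \meet a) \join (x \meet b)$ together with its reverse, which is exactly $x \meet a \leq x \meet b$. With this in hand, fix $x$ and a commuting family $\{y_i\}$ with $s = \bigvee_i y_i$. Monotonicity gives $x \meet y_i \leq x \meet s$ for all $i$, so the whole family $\{x \meet y_i\}$ lies in the downset $(x \meet s){\downarrow}$, which is a lattice by normality; in particular it is a commuting subset, so $t = \bigvee_i (x \meet y_i)$ exists by join completeness and satisfies $t \leq x \meet s$. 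Finally, using the previous claim together with the frame law in $L$, I would compute $[t] = \bigvee_i ([x] \meet [y_i]) = [x] \meet \bigvee_i [y_i] = [x] \meet [s] = [x \meet s]$; since $t \leq x \meet s$ and $t \Drel (x \meet s)$ together force $t = x \meet s$, the law $x \meet \bigvee_i y_i = \bigvee_i (x \meet y_i)$ follows. The remaining infinite distributive law $(\bigvee_i x_i) \meet y = \bigvee_i (x_i \meet y)$ is proved symmetrically, now invoking right distributivity in the monotonicity step and working inside $(s \meet y){\downarrow}$. This completes the verification that $A$ is a noncommutative frame.
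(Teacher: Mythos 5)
This theorem is imported by citation from \cite[Theorem 5.1]{completeness-issues}; the present paper gives no proof of it, so there is nothing internal to compare your argument against. On its own merits, your proof is correct and self-contained. The forward direction is indeed definitional, and for the converse the only thing to check is the pair of infinite distributive laws, which is exactly what you do. The two supporting facts you use are both available under the stated hypotheses: uniqueness and ``transitivity'' of restrictions to $\DD$-classes (from strong distributivity) and the fact that each downset $a{\downarrow}$ is a genuine lattice (from normality), which makes $\{x \meet y_i\}$ a commuting subset so that join completeness applies. Your key lemma --- that $\pi_A : A \to A/\DD$ preserves suprema of commuting subsets, proved by comparing $s$ with $\restricted{s}{w}$ for $w = [s] \meet v$ --- is the right pivot: it lets the frame law in $A/\DD$ force $[\,\bigvee_i (x \meet y_i)\,] = [x \meet \bigvee_i y_i]$, and then the standard fact that $t \leq u$ together with $t \mathbin{\DD} u$ implies $t = u$ closes the argument. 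The symmetric treatment of the other law via the other strong distributivity identity is likewise fine.
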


This leads to the following criterion. For the abuse of language ``is a sheaf on [\dots]'' we refer to the beginning of Subsection \ref{ssec:sheaves-on-dissolution}.

\begin{proposition} \label{prop:objects}
With the above notations, the following are equivalent:
\begin{enumerate}
\item $A$ is a noncommutative frame;
\item $\pi_*\GG$ is a sheaf on $Y$;
\item $\GG$ is a sheaf on $Y_d$.
\end{enumerate}
\end{proposition}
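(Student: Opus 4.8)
The plan is to prove $(1)\Leftrightarrow(2)$ directly, using the completeness criterion of Theorem~\ref{thm:completeness-issues}, and to obtain $(2)\Leftrightarrow(3)$ from Theorem~\ref{thm:sheaves-on-dissolution}.

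For $(2)\Leftrightarrow(3)$, I first check that $\GG(X_p)\neq\varnothing$, so that Theorem~\ref{thm:sheaves-on-dissolution} applies. Since $A/\mathcal{D}$ is a frame it has a top element $1$, and the projection $A\to A/\mathcal{D}$ is surjective, so there is an element of $A$ in the top $\mathcal{D}$-class. Under the identification $A\cong(X,\GG)^\ast$ this element is a pair $(X,s)$ with $s$ a section of $\GG$ over the whole space $X=X_p$; hence $\GG(X_p)\neq\varnothing$. Theorem~\ref{thm:sheaves-on-dissolution} then gives the equivalence of $(2)$ and $(3)$ immediately.

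The core of the argument is $(1)\Leftrightarrow(2)$. By Theorem~\ref{thm:completeness-issues}, and since $A/\mathcal{D}$ is a frame, statement $(1)$ is equivalent to $A$ being join complete, and I would translate this into the gluing condition defining $(2)$. Recall that the elements of $A$ are pairs $(U,s)$ with $U$ a compact open of $X$ (equivalently an element $u\in A/\mathcal{D}=\OO(Y)$) and $s\in\GG(U)$, and that the natural partial order is $(U,s)\leq(V,t)\Leftrightarrow U\subseteq V,\ t|_U=s$. A short computation with the meet and join formulas shows that a family $\{(U_i,s_i)\}_{i\in I}$ is a commuting subset precisely when the $s_i$ form a matching family, i.e.\ $s_i|_{U_i\cap U_j}=s_j|_{U_i\cap U_j}$ for all $i,j$. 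Any upper bound $(V,t)$ of such a family satisfies $u_i\leq v$ for all $i$, hence $\bigvee_i u_i\leq v$; writing $U$ for the compact open of $X$ corresponding to $\bigvee_i u_i\in A/\mathcal{D}$, the smallest possible domain of an upper bound is exactly $U$. Consequently the supremum exists if and only if $(s_i)$ has a unique gluing $s\in\GG(U)$: existence of a gluing produces a minimal upper bound $(U,s)$, and uniqueness is exactly what is needed for it to lie below every other upper bound.

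It remains to match this gluing condition with the statement that $\pi_*\GG$ is a sheaf on $Y$, and this is where I expect the main obstacle to lie. The point is that although the set-theoretic union $\bigcup_i U_i$ need not be compact open in $X$, the cover condition $i^{-1}(U)=\bigcup_i i^{-1}(U_i)$ in $\OO(Y)$ does hold for $U$ the compact open corresponding to $\bigvee_i u_i$: the sublocale inclusion $i$ in \eqref{eq:pullback-dissolution-2} is given by the frame surjection $\OO(X)\to\OO(Y)$ sending each compact open (a principal ideal) $U_i$ to $u_i$ and $U$ to $\bigvee_i u_i$, so $i^{-1}(U)=\bigvee_i u_i=\bigcup_i i^{-1}(U_i)$. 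Hence the families $(U_i\subseteq U)$ arising from commuting subsets of $A$ are exactly the covers appearing in the definition of ``sheaf on $Y$''. Therefore $A$ is join complete if and only if every such matching family of sections of $\GG$ glues uniquely over $U$, which is precisely the condition that $\pi_*\GG$ is a sheaf on $Y$. Combining this equivalence with $(2)\Leftrightarrow(3)$ completes the proof.
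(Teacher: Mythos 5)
Your proposal is correct and follows essentially the same route as the paper: $(2)\Leftrightarrow(3)$ via Theorem~\ref{thm:sheaves-on-dissolution} after observing that the top $\mathcal{D}$-class of $A$ gives $\GG(X_p)\neq\varnothing$, and $(1)\Leftrightarrow(2)$ by reducing $(1)$ to join completeness via Theorem~\ref{thm:completeness-issues} and identifying commuting subsets with matching families and suprema with unique gluings over the compact open corresponding to $\bigvee_i u_i$. The only cosmetic difference is that the paper phrases the ``sheaf on $Y$'' condition as bijectivity of the restriction maps $\GG(\nu(U))\to\GG(U)$ for the nucleus $\nu$, whereas you phrase it directly in terms of the covers $\{U_i\to U\}$ with $i^{-1}(U)=\bigcup_i i^{-1}(U_i)$; these are equivalent descriptions of the essential image of $i_*$.
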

\begin{proof}
We identify $A$ with $(X,\GG)^\ast$. The elements of $A$ are then pairs $(U,s)$ where $U$ is a compact open subset of $X$ and $s \in \GG(U)$ is a section.

\underline{$(1)\Rightarrow (2)$}.~ Note that the compact open subsets of $X$ form a frame, isomorphic to the frame of open subsets of $Y$. Let $U \subseteq X$ be an open set, and let $\nu(U)$ be the smallest compact open set containing $U$. To show that $\pi_*\GG$ is a sheaf on $Y$, it is enough to prove that the restriction map
\begin{equation} \label{eq:restriction-map-nu}
\rho : \GG(\nu(U)) \to \GG(U)
\end{equation}
is bijective. We can identify $\GG(\nu(U))$ with the $\mathcal{D}$-class in $A$ determined by the compact open subset $\nu(U) \subseteq X$. On the other hand, the elements of $\GG(U)$ correspond to the commuting subsets $\{ (V,s_V) : V \subseteq U \text{ compact open} \} \subseteq A$. The inverse to the map $\rho$ in (\ref{eq:restriction-map-nu}) is then given by taking joins of commuting families.

\underline{$(2)\Rightarrow(1)$}.~ By Theorem \ref{thm:completeness-issues}, it is enough to prove that $A$ is join complete. Consider $(V_i,s_i) \in A$ a commuting family indexed by $i \in I$. Let $V = \bigvee_{i \in I} V_i$ be the join in $A/\mathcal{D}$, or in the notation above $V = \nu\left(\bigcup_{i \in I} V_i \right)$. By \cite[Proposition 4.1]{completeness-issues}, the family $(V_i,s_i)$ has a join if and only if there is a unique element $(V,s) \in A$ such that $(V_i,s_i) \leq (V,s)$ for all $i \in I$, or in other words $s|_{V_i} = s_i$ for all $i \in I$. This is precisely the sheaf condition.

\underline{$(2)\Leftrightarrow(3)$}.~ This follows from Theorem \ref{thm:sheaves-on-dissolution}, provided that $\GG(X_p) \neq \varnothing$. But this follows because the elements of $\GG(X_p)$ are in bijective correspondence with the elements of $A$ that are in the top $\mathcal{D}$-class.
\end{proof}

This shows that left-handed noncommutative frames with commutative shadow $L = \OO(Y)$ are in bijective correspondence with sheaves $\FF$ on the dissolution locale $Y_d$ such that $\FF(Y_d) \neq \varnothing$. What about the morphisms?

\begin{proposition} \label{prop:morphisms}
Take noncommutative frames $A$ and $A'$ and a morphism $\phi : A \to A'$ in $\mathbf{SDL}$, i.e.\ $\phi$ is a proper map preserving $\meet$, $\join$ and $0$. If the induced morphism on commutative shadows $\phi/\mathcal{D} : A/\mathcal{D} \to A'/\mathcal{D}$ is a morphism of frames, then $\phi$ is a morphism of noncommutative frames.
\end{proposition}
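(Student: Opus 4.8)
The plan is to verify directly the four defining conditions of a morphism of noncommutative frames for $\phi$. The first two---that $\phi$ is a homomorphism of skew lattices and that $\phi(0)=0$---hold by hypothesis, since $\phi$ lies in $\mathbf{SDL}$. For the third condition, recall that the maximal $\mathcal{D}$-class of a noncommutative frame corresponds to the top element $1$ of its commutative shadow. A frame morphism preserves top elements, so $\phi/\mathcal{D}$ sends $1 \in A/\mathcal{D}$ to $1 \in A'/\mathcal{D}$; equivalently, $\phi$ carries the maximal $\mathcal{D}$-class of $A$ into that of $A'$.

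The substance of the proposition is the fourth condition, namely $\phi(\bigvee_i x_i) = \bigvee_i \phi(x_i)$ for every commuting subset $\{x_i : i \in I\} \subseteq A$. Write $x = \bigvee_i x_i$ and $y = \bigvee_i \phi(x_i)$. Since $\phi$ is a skew lattice homomorphism, $\{\phi(x_i) : i \in I\}$ is a commuting subset of $A'$, so $y$ exists by join-completeness of $A'$. Moreover, a skew lattice homomorphism preserves the natural partial order (which is defined through $\meet$), so from $x_i \leq x$ we obtain $\phi(x_i) \leq \phi(x)$ for all $i$, and hence $y \leq \phi(x)$. It remains only to prove the reverse inequality.

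The key step is to pass to the commutative shadow and compare $\mathcal{D}$-classes. Both projections $\pi_A$ and $\pi_{A'}$ are morphisms of noncommutative frames, hence preserve joins of commuting subsets, while $\phi/\mathcal{D}$ is a frame morphism, hence preserves all joins. Therefore $[\phi(x)] = (\phi/\mathcal{D})\big(\bigvee_i [x_i]\big) = \bigvee_i [\phi(x_i)] = [y]$, so that $\phi(x)$ and $y$ lie in the same $\mathcal{D}$-class. Now I would invoke the elementary fact that in a \emph{left-handed} skew lattice the natural partial order is trivial on each $\mathcal{D}$-class: if $a \leq b$ and $a \mathrel{\mathcal{D}} b$, then $a = b$. (Indeed, $b \meet a = a$ because $a \leq b$; the left-handed law gives $b \meet a \meet b = b \meet a = a$, whereas $a \mathrel{\mathcal{D}} b$ gives $b \meet a \meet b = b$; whence $a = b$.) Applying this with $a = y$ and $b = \phi(x)$ yields $y = \phi(x)$, completing the verification of condition four.

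I expect no deep obstacle here; the difficulty is organizational. One must check that each join genuinely exists---using join-completeness of $A'$ together with the fact that homomorphic images of commuting families remain commuting---and that the identity $[\phi(x)] = [y]$ really uses both that $\pi_A,\pi_{A'}$ preserve joins of commuting subsets and that $\phi/\mathcal{D}$ preserves \emph{arbitrary} joins (not merely finite ones). The left-handedness hypothesis is indispensable: collapsing comparability within a single $\mathcal{D}$-class to equality is exactly what forces $\phi(x) = y$, and the statement would not survive the removal of this assumption. Note that properness of $\phi$ plays no role in the argument itself.
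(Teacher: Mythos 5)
Your proof is correct and follows essentially the same route as the paper: establish $\bigvee_i\phi(x_i)\leq\phi(\bigvee_i x_i)$, show both sides lie in the same $\mathcal{D}$-class using that $\phi/\mathcal{D}$ preserves arbitrary joins, and conclude equality because the natural partial order is trivial on a $\mathcal{D}$-class. One small inaccuracy in your closing remarks: that last fact does not require left-handedness (in any skew lattice, $a\leq b$ gives $b\meet a = a$, and $a\mathrel{\mathcal{D}}b$ gives $b = b\meet a\meet b = b\meet a = a$ directly), so left-handedness is not what makes this step work, even though it is part of the standing hypotheses via $\mathbf{SDL}$.
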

\begin{proof}
Let $(a_i)_{i \in I}$ be a commuting family in $A$. Note that 
\begin{equation} \label{eq:inequality}
\bigvee_{i \in I} \phi(a_i) \leq \phi\left( \bigvee_{i \in I} a_i \right).
\end{equation}
Recall that $[x]$ denotes the $\mathcal{D}$-class of an element $x$. Then we compute:
\begin{equation*}
\left[\bigvee_{i \in I} \phi(a_i) \right] = \bigvee_{i \in I}(\phi/\mathcal{D})([a_i])
= (\phi/\mathcal{D})\left(\left[\bigvee_{i \in I} a_i\right]\right) = \left[\phi\left( \bigvee_{i \in I} a_i \right)\right]
\end{equation*}
where in the second equality we use that $(\phi/\mathcal{D})$ is a frame morphism. Since (\ref{eq:inequality}) is an inequality between two elements in the same $\mathcal{D}$-class, it must be an equality. This shows that $\phi$ is a morphism of noncommutative frames.
\end{proof}

We can now formulate Proposition \ref{prop:objects} in a categorical way. We define the \emph{category of sheaves on dissolution locales} $\sh(\mathsf{Loc}_d)$ as the category with
\begin{itemize}
\item as objects the pairs $(Y,\FF)$ with $Y$ a locale and $\FF$ a sheaf on the dissolution locale $Y_d$ such that $\FF(Y_d) \neq \varnothing$;
\item as morphisms the pairs $(f,\lambda) : (Y,\FF) \to (Y',\FF')$ with $f : Y \to Y'$ a locale morphism and
\begin{equation*}
\lambda : \FF' \to g_*\FF
\end{equation*}
a sheaf morphism, where $g : Y_d \to Y'_d$ is the morphism induced by $f$.
\end{itemize}

Then by combining Proposition \ref{prop:objects} and Proposition \ref{prop:morphisms} we find:

\begin{theorem}[Duality for general left-handed noncommutative frames] \label{thm:duality}
The duality of Theorem \ref{thm:nc-priestley} (originally \cite[Theorem 3.7]{kcv-priestley}) restricts to a dual equivalence between the category $\mathbf{LNFrm}$ of left-handed noncommutative frames and the category $\sh(\mathsf{Loc}_d)$ of sheaves on dissolution locales.
\end{theorem}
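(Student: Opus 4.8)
The plan is to deduce the statement from the dual equivalence $(-)^{\ast} : \sh(\mathsf{LPS})^\op \simeq \mathbf{SDL}$ of Theorem \ref{thm:nc-priestley} by matching objects and morphisms on the two sides separately. Throughout I identify an object $A$ of $\mathbf{SDL}$ with the pair $A_{\ast} = (X,\GG)$, where $X = \Spec(A/\mathcal{D})$ and $\GG$ is a sheaf on the patch space $X_p$, and I recall that $\mathbf{LNFrm}$ consists of those $A \in \mathbf{SDL}$ (all of which are already left-handed) that happen to be noncommutative frames.

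For the objects, I would argue as follows. An object $A \in \mathbf{SDL}$ lies in $\mathbf{LNFrm}$ exactly when $A/\mathcal{D}$ is a frame and $A$ is join complete. The first condition means that $X$ is the Priestley space dual to a frame $L = A/\mathcal{D}$; writing $Y$ for the locale with $\OO(Y) = L$, we obtain the dissolution pullback (\ref{eq:pullback-dissolution-2}), with $X_p$, the inclusion $j : Y_d \to X_p$, and the projection $\pi : X_p \to X$. The second condition is then exactly Proposition \ref{prop:objects}: $A$ is a noncommutative frame if and only if $\GG$ is a sheaf on $Y_d$, i.e.\ $\GG \cong j_{\ast}\FF$ for a sheaf $\FF$ on $Y_d$ that is unique up to isomorphism. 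Since the top $\mathcal{D}$-class of $A$ is nonempty if and only if $\GG(X_p) \neq \varnothing$, which under $j_{\ast}$ translates to $\FF(Y_d) \neq \varnothing$, the rule $A \mapsto (Y,\FF)$ is a bijection between the objects of $\mathbf{LNFrm}$ and those of $\sh(\mathsf{Loc}_d)$.

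The morphism correspondence is where the work lies; here one uses Proposition \ref{prop:morphisms} rather than merely restricting the morphisms of $\mathbf{SDL}$. Combined with the fact that the shadow of any noncommutative frame morphism is a frame morphism, that proposition shows that for $A, A' \in \mathbf{LNFrm}$ a map $\phi : A \to A'$ is a morphism of noncommutative frames precisely when it preserves $\meet,\join,0$ and its shadow $\phi/\mathcal{D} : A/\mathcal{D} \to A'/\mathcal{D}$ is a frame morphism. Now $\phi/\mathcal{D}$ is a frame morphism $\OO(Y) \to \OO(Y')$ if and only if, by the very definition of locale morphism, it underlies a locale morphism $Y' \to Y$; and this is exactly the extra condition that singles out the morphisms of $\sh(\mathsf{Loc}_d)$ among the continuous monotonous maps underlying $\sh(\mathsf{LPS})$-morphisms. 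It remains to match the sheaf components. A $\sh(\mathsf{Loc}_d)$-morphism carries a sheaf map $\FF \to g_{\ast}\FF'$ on the dissolution locales, where $g : Y'_d \to Y_d$ is induced by $Y' \to Y$. Functoriality of the dissolution construction with respect to the pullback (\ref{eq:pullback-dissolution-2}) gives a commuting square of locales, hence an equality of pushforwards $f_{\ast}j'_{\ast} = j_{\ast}g_{\ast}$ on sheaves; since $j_{\ast}$ and $j'_{\ast}$ are fully faithful, being pushforwards along the sublocale inclusions of Subsection \ref{ssec:sheaves-on-dissolution}, the dissolution-level sheaf maps $\FF \to g_{\ast}\FF'$ correspond bijectively to patch-level sheaf maps $\GG \to f_{\ast}\GG'$ under $\GG = j_{\ast}\FF$ and $\GG' = j'_{\ast}\FF'$. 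This exhibits the morphisms of $\mathbf{LNFrm}$ and of $\sh(\mathsf{Loc}_d)$ as the same data, compatibly with the object bijection.

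Putting the two bijections together, and noting that naturality is inherited from Theorem \ref{thm:nc-priestley}, yields the dual equivalence $\mathbf{LNFrm} \simeq \sh(\mathsf{Loc}_d)^\op$. I expect the main obstacle to be precisely the sheaf-theoretic half of the morphism step: verifying the compatibility $f_{\ast}j'_{\ast} = j_{\ast}g_{\ast}$ of pushforwards coming from the two dissolution pullbacks and transporting sheaf morphisms across it via the full faithfulness of $j_{\ast}$. By contrast, matching the underlying maps (frame-morphism shadow versus locale morphism) and the global-support conditions is routine once Propositions \ref{prop:objects} and \ref{prop:morphisms} are available.
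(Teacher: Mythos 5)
Your proposal is correct and follows essentially the same route as the paper: the paper's own proof is literally just the combination of Proposition \ref{prop:objects} (matching objects via the sheaf-on-$Y_d$ criterion) with Proposition \ref{prop:morphisms} (matching morphisms via the frame-morphism condition on shadows), applied to restrict the equivalence of Theorem \ref{thm:nc-priestley}. You merely spell out details the paper leaves implicit, in particular the transport of sheaf morphisms across the commuting square of pushforwards using full faithfulness of $j_*$, which is a faithful elaboration rather than a different argument.
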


Take a pair $(Y,\FF)$ in $\sh(\mathsf{Loc}_d)$. Consider the pullback diagram
\begin{equation} \label{eq:pullback-dissolution-3}
\begin{tikzcd}
Y_d \ar[r,"{j}"] \ar[d,"{\delta}"'] & X_\patch \ar[d,"{\pi}"] \\
Y \ar[r,"{i}"'] & X
\end{tikzcd}.
\end{equation}
Then the left-handed noncommutative frame corresponding to $(Y,\FF)$ is given by
\begin{equation} \label{eq:duality-1}
A(Y,\FF) = \{ (U,s) : U \in \OO(Y),~ s \in \FF(\delta^{-1}(U)) \}
\end{equation}
with meet and join defined by
\begin{equation} \label{eq:duality-2}
\begin{split}
&(U,s) \meet (V,t) = (U \cap V, s|_{U \cap V}) \\
&(U,s) \join (V,t) = (U \cup V, s|_{U-V} \cup t)
\end{split}
\end{equation}
where $s|_{U-V} \cup t$ is the unique section on $U \cup V$ restricting to $s|_{U-V}$ on $U-V$ and to $t$ on $V$.

Conversely, this means that the pair $(Y,\FF)$ satisfies
\begin{equation*}
\OO(Y) = A(Y,\FF)/\mathcal{D}
\end{equation*}
and
\begin{equation*}
\FF(\delta^{-1}(U)) = \{ a \in A(Y,\FF) : [a] = U \}.
\end{equation*}

\subsection{Characterization of spatial noncommutative frames}

We end the paper with a characterization of spatial noncommutative frames in terms of the duality $\mathbf{LNFrm}^\op \simeq \sh(\mathsf{Loc}_d)$.
This is based on the following relation between $Y_f$ and the dissolution locale $Y_d$:

\begin{proposition}[{\cite[Proposition 5.4]{frame-of-nuclei-spatial}}]
Let $Y$ be a sober topological space. Then $Y_f = \pt(Y_d)$. So $Y_f$ corresponds to the largest spatial sublocale of $Y_d$. More generally, if $Y$ is a locale, then $\pt(Y)_f = \pt(Y_d)$.
\end{proposition}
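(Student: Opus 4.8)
The plan is to prove the localic statement $\pt(Y)_f = \pt(Y_d)$ for an arbitrary locale $Y$, and then to deduce the case of a sober topological space: applying the localic statement to the locale $i(Y)$ gives $\pt(i(Y))_f = \pt((i(Y))_d)$, and since $Y$ is sober the canonical map $Y \to \pt(i(Y))$ is a homeomorphism, so the right-hand side is $\pt(Y_d)$ while the left-hand side is $Y_f$.

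For the localic statement I would apply the functor $\pt$ to the pullback square (\ref{eq:pullback-dissolution}). As $\pt$ is a right adjoint (to $i$), it preserves limits, so
\[
\pt(Y_d) ~\cong~ \pt(Y) \times_{\pt(X)} \pt(X_\patch),
\]
where fibre products of spaces carry the subspace topology of the product. The frames $\mathrm{Idl}(L)$ and $\mathrm{Idl}(L_b)$ are coherent, hence spatial and sober, so that $\pt(X)$ is the spectral space $\Spec(L)$ and $\pt(X_\patch)$ is the Stone space $\Spec(L_b)$, i.e.\ $X$ equipped with the patch topology.

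Next I would identify the underlying sets. By the universal property of the boolean envelope $L_b$, restriction along $L \hookrightarrow L_b$ is a bijection between ultrafilters of $L_b$ and prime filters of $L$; under this bijection the map $\pt(\pi) : \pt(X_\patch) \to \pt(X)$ becomes the identity. Hence the fibre product projects bijectively onto $\pt(Y)$ via $y \mapsto (y,\iota(y))$, where $\iota : \pt(Y) \to X_\patch$ sends $y$ to the ultrafilter of $L_b$ extending the prime filter $\{a \in L : y(a) = 1\}$. The remaining and main task is then to compare topologies. The subspace topology on the fibre product is generated by the opens of $\pt(Y)$ together with the sets $\iota^{-1}(W)$ for $W$ open in $X_\patch$. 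The opens of $\pt(Y)$ are exactly the basic opens $U_a = \{y : y(a) = 1\}$ for $a \in L$, and the topology of $X_\patch$ is generated by the compact opens $\widehat{a}$ and their complements. A direct check gives $\iota^{-1}(\widehat{a}) = U_a$ and $\iota^{-1}(X \setminus \widehat{a}) = \pt(Y) \setminus U_a$, so the topology on $\pt(Y_d)$ is generated by the $U_a$ and their complements; taking finite intersections recovers the locally closed sets, which is precisely the front topology on $\pt(Y)$.

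A more direct alternative works with $N(L) = \OO(Y_d)$ itself. Since $\mathbf{2}$ is Boolean, the universal property of $N(L)$ identifies frame maps $N(L) \to \mathbf{2}$ with frame maps $L \to \mathbf{2}$, so $\pt(Y_d) \cong \pt(Y)$ as sets; moreover each $\nu_a$ is complemented, with complement the open nucleus $b \mapsto (a \to b)$, and the same universal property shows that these closed and open nuclei generate $N(L)$ as a frame. As points preserve complements, the basic open $U_{\nu_a}$ is $U_a$, while the basic open attached to the complement of $\nu_a$ is $\pt(Y) \setminus U_a$, so $\OO(\pt(Y_d))$ is again generated by the $U_a$ and their complements. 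In either route the only real obstacle is bookkeeping: verifying that the complemented (clopen) data living on the dissolution side translates exactly into the ``opens together with their complements'' that define the front topology.
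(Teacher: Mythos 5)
The paper itself gives no proof of this proposition --- it is imported verbatim from \cite{frame-of-nuclei-spatial} --- so there is no internal argument to compare yours against; I can only assess your proof on its own terms, and it is correct. The reduction of the sober case to the localic statement via $Y \cong \pt(i(Y))$ is right, and both of your routes work. The first correctly uses that $\pt$, as a right adjoint, preserves the pullback square (\ref{eq:pullback-dissolution}); the key computations --- that $\pt(\pi)$ is the ultrafilter-restriction bijection coming from the universal property of $L_b$, and that $\iota^{-1}(\widehat{a}) = U_a$ while $\iota^{-1}(X \setminus \widehat{a}) = \pt(Y)\setminus U_a$ --- are exactly what identifies the subspace topology on the fibre product with the topology generated by the $U_a$ and their complements, i.e.\ the front topology. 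The second route is more economical, since it bypasses the pullback entirely: points of $N(L)$ biject with points of $L$ because every element of $\mathbf{2}$ is complemented, and your appeal to the universal property to show that the closed nuclei and their complements generate $N(L)$ is sound (the subframe $N'$ they generate receives $L$ with complemented image, so the induced map $N(L) \to N'$ followed by the inclusion must be the identity by uniqueness, forcing $N' = N(L)$). The only caveat is that both routes rest on inputs the paper asserts without proof --- the pullback square in the first case, the universal property of $N(L)$ and the complementarity of $\nu_a$ with the open nucleus $b \mapsto (a \to b)$ in the second --- but these are standard and are taken as given in Section \ref{sec:dissolution}, so relying on them is legitimate. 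You do not explicitly justify the middle sentence of the statement (that $Y_f$ is the \emph{largest} spatial sublocale of $Y_d$), but that is the general fact that the counit $i(\pt(Z)) \to Z$ is the maximal spatial sublocale of any locale $Z$, applied to $Z = Y_d$.
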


We have then a commutative diagram
\begin{equation} \label{eq:k-delta}
\begin{tikzcd}
Y_f \ar[r,"{k}"] \ar[rd,equal] & Y_d \ar[d,"{\delta}"]\\
& Y 
\end{tikzcd}
\end{equation}
where $k : Y_f \to Y_d$ is the inclusion of $Y_f$ as a sublocale, and $\delta$ is the morphism from (\ref{eq:pullback-dissolution-3}).

\begin{proposition} \label{prop:comparison} Let $A$ be a left-handed noncommutative frame. Let $(Y,\FF)$ be the dual of $A$ in $\sh(\mathsf{Loc}_d)$, in particular $\OO(Y) = A/\mathcal{D}$. Then the following are equivalent:
\begin{enumerate}
\item $A$ is spatial;
\item $Y$ is spatial and $\FF \cong k_*\EE$ for some sheaf $\EE$ on $Y_f$.
\end{enumerate}
In this case, $A = H(Y,\EE)$.
\end{proposition}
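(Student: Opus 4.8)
The plan is to prove the equivalence $(1) \Leftrightarrow (2)$ by comparing the two duality results via the commutative diagram (\ref{eq:k-delta}). The key idea is that $A$ is spatial precisely when it arises as $H(Y',\EE)$ for a sober space $Y'$ with a sheaf on $Y'_f$, and that this should force $\OO(Y') = A/\DD = \OO(Y)$ (so $Y$ is spatial), with the sheaf $\FF$ on $Y_d$ being pushed forward from the sublocale $Y_f \hookrightarrow Y_d$.

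First I would prove $(2) \Rightarrow (1)$, which I expect to be the more direct direction. Assume $Y$ is spatial and $\FF \cong k_*\EE$ for a sheaf $\EE$ on $Y_f$. Using the duality of \Cref{thm:duality}, $A \cong A(Y,\FF)$ as described in (\ref{eq:duality-1})--(\ref{eq:duality-2}). I would then compute $\FF(\delta^{-1}(U))$ for $U \in \OO(Y)$: since $\FF = k_* \EE$ and (\ref{eq:k-delta}) commutes ($\delta \circ k = \id$), one has $\delta^{-1}(U) \cap Y_f = k^{-1}(\delta^{-1}(U)) = U$ as a sublocale of $Y_f$, so
\begin{equation*}
\FF(\delta^{-1}(U)) = (k_*\EE)(\delta^{-1}(U)) = \EE(k^{-1}\delta^{-1}(U)) = \EE(U),
\end{equation*}
where on the right $U$ is viewed as an open of $Y_f$ (every open of $Y$ is open in $Y_f$). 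Substituting this identification of sections into (\ref{eq:duality-1})--(\ref{eq:duality-2}), the meet and join operations on $A(Y,\FF)$ become exactly the operations defining $H(Y,\EE)$ in Subsection \ref{ssec:spatial-definition}. Since $Y$ is spatial we may, after replacing $Y$ by a sober space with the same locale (using the remark in Subsection \ref{ssec:spatial-definition} that $H(Y,\EE) \cong H(\hat Y, \beta_*\EE)$), conclude $A \cong H(Y,\EE)$, hence $A$ is spatial.

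For $(1) \Rightarrow (2)$, suppose $A = H(Y',\EE)$ with $Y'$ sober and $\EE$ a sheaf on $Y'_f$. Then $A/\DD = \OO(Y')$, so the locale $Y$ dual to $A$ satisfies $\OO(Y) = \OO(Y')$; since $Y'$ is sober, $Y$ is spatial with $Y' = \pt(Y)$. By the cited result of \'Avila--Bezhanishvili--Morandi--Zald\'ivar (\cite[Proposition 5.4]{frame-of-nuclei-spatial}), $Y'_f = \pt(Y_d)$ is the largest spatial sublocale of $Y_d$, realized by the inclusion $k : Y_f \to Y_d$. I would identify $\EE$ with a sheaf on the sublocale $Y_f \subseteq Y_d$ and set $\FF = k_*\EE$; the task is then to check that this $\FF$ is the sheaf on $Y_d$ dual to $A$, equivalently that $\FF(\delta^{-1}(U)) = \{\, a \in A : [a] = U\,\} = \EE(U)$, which holds by the same computation as above together with the description of the sections of $H(Y',\EE)$.

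The main obstacle I anticipate is the bookkeeping across the two dualities: one must verify that the two descriptions of $\FF$ --- the abstract one from \Cref{thm:duality} via $\FF(\delta^{-1}(U)) = \{a \in A : [a]=U\}$ and the concrete one $k_*\EE$ --- genuinely coincide as sheaves on $Y_d$, not merely on the sections over the distinguished opens $\delta^{-1}(U)$. Here the point is that the opens of the form $\delta^{-1}(U)$ for $U \in \OO(Y)$ are exactly the closed sublocales of $Y_d$ (equivalently the image of $\OO(Y) \to N(L)$, $a \mapsto \nu_a$), but these do not form a basis for $Y_d$; so to pin down $\FF$ on all of $Y_d$ I would invoke \Cref{thm:sheaves-on-dissolution}, whose condition $\pi_*\GG$ is a sheaf on $Y$ controls precisely the extension from the $\delta^{-1}(U)$-sections to a genuine sheaf on the dissolution locale. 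Checking that $k_*\EE$ satisfies this condition, using that $Y_f$ is the space of points of $Y_d$, is the crux of the argument.
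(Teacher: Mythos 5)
Your proposal follows the same basic strategy as the paper: both directions come down to the identity $(k_*\EE)(\delta^{-1}(U)) = \EE(k^{-1}\delta^{-1}(U)) = \EE(U)$ coming from the commutativity of diagram (\ref{eq:k-delta}), followed by matching the formulas (\ref{eq:duality-1})--(\ref{eq:duality-2}) against the definition of $H(Y,\EE)$. Your treatment of $(2)\Rightarrow(1)$ is exactly the paper's argument. Where you diverge is in $(1)\Rightarrow(2)$: the ``crux'' you identify --- pinning down $\FF$ on all of $Y_d$ rather than only on the opens $\delta^{-1}(U)$ --- is real if you insist on verifying $\FF \cong k_*\EE$ section by section, but the paper dissolves it with a softer argument. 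Namely, $(Y,k_*\EE)$ is itself an object of $\sh(\mathsf{Loc}_d)$, its associated noncommutative frame $A(Y,k_*\EE)$ computed from (\ref{eq:duality-1})--(\ref{eq:duality-2}) is visibly isomorphic to $H(Y,\EE) = A \cong A(Y,\FF)$, and since Theorem \ref{thm:duality} is an equivalence of categories, isomorphic duals force $(Y,k_*\EE) \cong (Y,\FF)$, hence $\FF \cong k_*\EE$. No direct comparison of sections over a basis of $Y_d$ is needed.

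By contrast, the tool you propose for that step, Theorem \ref{thm:sheaves-on-dissolution}, does not do the job you assign to it: it is a criterion for \emph{which} sheaves on $X_p$ lie in the essential image of $j_*$, not a uniqueness statement saying that a sheaf on $Y_d$ is determined by its sections over the open nuclei $\delta^{-1}(U)$. Indeed, the skew lattice $A$ encodes strictly more than the presheaf $U \mapsto \FF(\delta^{-1}(U))$ on $\OO(Y)$ (the join operation sees the clopen decompositions $U = (U-V) \sqcup V$ in $X_p$), so an argument that only matches sections over the $\delta^{-1}(U)$ and then appeals to Theorem \ref{thm:sheaves-on-dissolution} would leave a gap. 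Replacing that step by the full-faithfulness of the duality, as the paper does, closes it and shortens the proof considerably.
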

\begin{proof}
\underline{$(1)\Rightarrow(2)$}.~ We can write $A = H(Y',\EE)$ for $Y'$ a sober topological space and $\EE$ a sheaf on $Y'_f$. Then $\OO(Y') = A/\mathcal{D} = \OO(Y)$, so $Y$ is the underlying locale of the sober topological space $Y'$. We make the identification $Y = Y'$. Take $k$ as in (\ref{eq:pullback-dissolution-3}). By Theorem \ref{thm:duality}, the pair $(Y,k_*\EE)$ defines a noncommutative frame $A'$, and using equations (\ref{eq:duality-1}) and (\ref{eq:duality-2}) we see that $A' \cong A$. This shows that $\FF \cong k_*\EE$.

\underline{$(2)\Rightarrow(1)$}.~ Suppose that $Y$ is spatial and that $\FF \cong k_*\EE$ for some sheaf $\EE$ on $Y_f$. Then we can use equations (\ref{eq:duality-1}) and (\ref{eq:duality-2}) to show that $A = H(Y,\FF)$.
\end{proof}

\bibliographystyle{amsalphaarxiv}
\bibliography{thesis/thesis}

\end{document}